\newcommand{\cmmnt}[1]{}
\newcommand{\yh}[1]{{ \color{black}   #1}}
\newcommand{\rev}[1]{{ \color{black} #1}}
\newcommand\bu{\bm{u}} 
\newcommand\bv{\bm{v}} 
\newcommand\bV{\bm{V}} 
\crefname{hypothesis}{Hypothesis}{Hypotheses}
\title{A preconditioner for the  grad-div stabilized equal-order finite elements discretizations of the Oseen problem\thanks{Submitted to the editors DATE. \funding{The author M.O. was partially supported by National Science Foundation under Grants No. DMS-2309197 and  DMS-2408978}}}
\author{Yunhui He\thanks{Department of Mathematics, University of Houston, 3551 Cullen Blvd, Houston, Texas 77204-3008, USA,  (\email{yhe43@central.uh.edu}, \email{maolshanskiy@uh.edu})},  \and  Maxim Olshanskii\footnotemark[2]}
\begin{document}

\maketitle

\begin{abstract}
The paper considers grad-div stabilized equal-order finite elements (FE) methods for the linearized Navier-Stokes equations.  
A block triangular preconditioner for the resulting system of algebraic equations is proposed which is closely related to the Augmented Lagrangian (AL) preconditioner. A field-of-values analysis of a preconditioned Krylov subspace method shows convergence bounds that are independent of the mesh parameter variation. Numerical studies support the theory and demonstrate  the robustness of the approach also with respect to the viscosity parameter variation, as is typical for AL preconditioners when applied to inf-sup stable FE pairs. The numerical experiments also address the accuracy of grad-div stabilized equal-order FE method for the steady state Navier-Stokes equations.  

\end{abstract}
\begin{keywords}
Oseen problem, preconditioning, equal-order finite elements, Krylov subspace methods, field-of-values
\end{keywords}

\begin{AMS}
65F08,  65N22,  65N30
\end{AMS}


	\section{Introduction}
	
	The numerical solution of a system of linear algebraic equations resulting from discretizing the Oseen problem is one of the central problems in numerical linear algebra for computational fluid dynamics. The matrix of the system is large, sparse, indefinite, and can be poorly conditioned with a dominating skew-symmetric part at higher Reynolds numbers. Developing efficient algebraic solvers, which means scalable and robust with respect to physical parameters, for this system is a formidable challenge. This challenge has been addressed by many authors from different angles since the mid-90s.
	
	The notable achievements are the development of block preconditioners~\cite{elman2014finite} complemented with Schur complement preconditioners such as pressure convection-diffusion (PCD) and least-square commutator (aka BFBt)\cite{kay2002preconditioner,elman1999preconditioning}, and the introduction of the Augmented Lagrangian (AL) approach \cite{benzi2006augmented}.
	
	Among known approaches, only the Augmented Lagrangian method is recognized to deliver fast convergence rates that are essentially independent of the Reynolds number, the critical physical parameter. This robustness property, documented in multiple studies (e.g.,\cite{benzi2006augmented,de2007two,ur2008comparison,he2011augmented,borm2012,farrell2019augmented,farrell2021reynolds,olshanskii2022recycling,lohmann2024augmented}), is not unconditional. It is known only for inf-sup stable finite element discretizations and relies on the availability of an efficient preconditioner for the velocity block of the augmented system. This condition restricts the effective use of the AL approach to either moderately sized systems, where an (incomplete) factorization of the velocity block is feasible~\cite{olshanskii2022recycling}, or to certain finite element pairs and grids that are amenable to specialized multigrid techniques~\cite{schoberl1999multigrid,benzi2006augmented,farrell2019augmented,farrell2021reynolds}. 
	
	For stabilized equal-order finite element discretizations of the Oseen problem, the Augmented Lagrangian approach is much less studied. As far as we are aware,~\cite{benzi2011modified} is the only paper that introduces an extension for equal-order discretizations. While effective, this extension imposes restrictions on the augmentation parameter, making it not completely robust with respect to the Reynolds number and mesh size variation.
	
	The present paper focuses on building an efficient algebraic solver for the Oseen problem discretized with the equal-order finite element method. Our approach is closely related to the AL method but differs from that in \cite{benzi2011modified}. In that paper, the augmentation is done at the algebraic level, leading to the pressure stabilization matrix appearing in the new velocity operator. Here, the augmentation is done at the continuous level, rendering the method as grad-div stabilization. This modification addresses two issues: it removes algebraic constraints on the augmentation parameter and significantly reduces the fill-in of the velocity matrix compared to the algebraic augmentation. The close relationship between the AL method and grad-div stabilization is well-known, and improving algebraic solvers by adding grad-div stabilization to incompressible fluid problems is a well-studied topic (see, e.g.,\cite{glowinski1989augmented,kobel1995solving,olshanskii2004grad,olshanskii2002low,heister2013efficient,le2013preconditioning,moulin2019augmented}).

	Existing studies of algebraic solvers based on grad-div stabilization have dealt with either a `continuous' formulation (no discretization involved) or fluid problems discretized using inf-sup stable elements such as Taylor–Hood or Scott–Vogelius elements. This might be surprising since the first paper where the grad-div term appears is \cite{franca1992stabilized}, on equal-order finite elements for the incompressible Navier–Stokes equations. The grad-div stabilization acts as a compensation mechanism for under-resolved pressure variables, as explained in \cite{olshanskii2002low,olshanskii2009grad}. Since under-resolved pressure is more typical for inf-sup stable elements, most subsequent studies of grad-div stabilization have focused on inf-sup stable discretizations. The present paper aims to rectify this by introducing an AL-type preconditioner for stabilized equal-order discretizations of the Oseen problem and assessing the accuracy of grad-div stabilized equal-order finite elements for the incompressible Navier–Stokes equations.
	
	Although this paper addresses the accuracy of grad-div stabilized equal-order finite elements in its numerical experiments section, the main focus is on the algebraic solver. Thus, the primary theoretical contribution is a convergence analysis of the GMRES method with the proposed preconditioners by deriving field-of-values estimates for the preconditioned matrices. The analysis of iterative methods for linearized fluid problems often takes the form of eigenvalue bounds for the preconditioned matrices. While useful, these bounds do not provide much insight into the convergence behavior of preconditioned Krylov subspace methods for non-normal matrices. Rigorous analysis of preconditioned Krylov subspace methods for the discrete Oseen problem is scarce, with convergence estimates found in \cite{klawonn1999block,loghin2004analysis,benzi2011field}. Only \cite{benzi2011field} addresses the convergence analysis of GMRES with the augmented Lagrangian preconditioner. Our analysis differs from that in \cite{benzi2011field} in both the problem in question and the arguments used to derive the field-of-values estimates.
	
	In summary, the principal contributions of this paper are: (i) Introducing, for the first time, an AL-type preconditioner based on continuous-level augmentation for the discrete Oseen problem resulting from pressure-stabilized finite element methods; (ii) Proving field-of-values estimates for the preconditioned system, leading to optimal convergence bounds for the GMRES algorithm applied to solve the system; (iii) Conducting numerical experiments to assess both the performance of the algebraic solver and the accuracy of the grad-div stabilized equal-order finite element method for the steady-state incompressible Navier–Stokes problem.

	The remainder of the paper is organized as follows. Section \ref{sec:discretization} introduces the Oseen problem and a finite element formulation with some auxiliary results. In section \ref{sec:algebraic-preconditioners}, we consider an algebraic system and a pressure Schur complement preconditioner. We also derive some useful eigenvalue estimates for the preconditioned Schur complement matrix. Section \ref{sec:field-of-values2} provides a field-of-values analysis for a preconditioner for the full system. Numerical results are presented to illustrate our theoretical findings in section \ref{sec:num}. Concluding remarks are collected in section \ref{sec:con}.
	
	\section{Finite element formulation}\label{sec:discretization}
	
	Let $\Omega \subset \mathbb{R}^d$ with $d=2,3$ be a bounded  polygonal or polyhedral domain.
	In this work, we are interested in numerical solutions to  the Oseen problem: Given a smooth divergence free vector field $\bm{a}:\Omega:\to\mathbb{R}^d$ and a force field $\bm{f}:\Omega:\to\mathbb{R}^d$, find the velocity field  $\bm{u}:\Omega:\to\mathbb{R}^d$ and the scalar  pressure
	function  $p:\Omega:\to\mathbb{R}$ solving the following system: 
	\begin{align*}
		-\nu \Delta \bm{u} +(\bm{a}\cdot \nabla) \bm{u} +\nabla p &=\bm{f} \quad \text{in}\quad \Omega, \\
		{\rm div}\bm{u} &=0~ \quad \text{in}\quad \Omega, \\
		\bm{u}&=\bm{0}\, \quad \text{on}\quad \partial\Omega.
	\end{align*} 
	We assume homogeneous Dirichlet boundary conditions for convenience, but the method and analysis extend to other common boundary conditions. The pressure is defined up to an additive constant.

	To formulate variational and finite element problems, we need the following bilinear forms: 
	\begin{align*}
		a(\bm{u},\bm{v})=\nu(\nabla \bm{u}, \nabla \bm{v}) + (\bm{a} \cdot \nabla \bm{u}, \bm{v}),\quad b(\bm{v},q)= -(q,{\rm div}\bm{v}),\quad\text{with}~\bu, \bv\in H^1_0(\Omega)^d,~q\in L^2(\Omega).
	\end{align*}
	Here and further, $(\cdot,\cdot)$ and $\|\cdot\|$ denote the $L^2(\Omega)$ inner product and norm, respectively.  \yh{Note that $a(\bm{u},\bm{u})= \nu(\nabla \bm{u}, \nabla \bm{u})$, since the convection term is skew-symmetric.}
	
	The weak formulation of the Oseen problem then reads: Find $\bu\in  (H_{0}^1(\Omega))^d$, $p\in L^2(\Omega)$ such that 
	\begin{align*}
		a(\bm{u},\bm{v}) +b(\bm{v},p) &=\langle\bm{f},\bm{v}\rangle, \\
		b(\bm{u},q)& =0,
	\end{align*}
	for any $\bv\in  (H_{0}^1(\Omega))^d$, $q\in L^2(\Omega)$.
	
	We assume a family $ \{\mathcal{T}_h\}_{h>0} $ of quasi-uniform regular tessellations 
	of $\Omega$, parameterized with the discretization parameter $h>0$.  
	For any integer $k$, we define
	\begin{equation*}
		R_k(T) =\begin{cases}
			P_k(T) &  \text{if $T$ is a triangle or tetrahedron},\\
			Q_k(T)  &  \text{if $T$ is a quadrilateral or hexahedron},
		\end{cases}
	\end{equation*}
	where $P_k(T)$ is the space of polynomials of total degree at most $k$ on an element $T$, and $Q_k(T)$ is the space of tensor product polynomials of degree
	at most $k$ on an element $T$.

	In this paper, we are interested in  equal-order finite element approximations of the Oseen problem.   Therefore, velocity and pressure finite element spaces $\bm{V}_h$ and $Q_h$ are defined as
	\begin{align*}
		\bm{V}_h  &= \{ \bm{v}_h \in (H_{0}^1(\Omega))^d :  \bm{v}_h |_T \in R_k(T)^d\,\, \forall T\in \mathcal{T}_h \}, \\
		Q_h  &= \{ q_h \in L^2_0(\Omega): q_h |_T \in R_k(T)\,\, \forall T\in \mathcal{T}_h \}.
	\end{align*}
	Equal order finite elements require a proper stabilization~\cite{boffi2013mixed}. 
	To provide it, we need an additional bilinear form $s(\cdot,\cdot)$, which is a symmetric positive semi-definite bilinear form on $L^2(\Omega)\times L^2(\Omega)$. 
	Furthermore, we assume that  the following inf-sup condition is satisfied 
	\begin{equation}\label{ineq:inf-sup-b}
		\sup_{0\neq\bm{v}_h\in \bm{V}_h} \frac{b(\bm{v}_h,q_h)}{\|\nabla\bm{v}_h\|}  +s(q_h,q_h)^{1/2} \geq\delta_0 \|q_h\| \quad \forall q_h\in Q_h,
	\end{equation}
	where $\delta_0>0$ is a constant that does not depend on the mesh parameter $h$.
	Suitable definitions of  $s(g_h,q_h)$ can be found, for example, in \cite{burman2008pressure,garcia2021symmetric}.  
	For the analysis that follows,  a particular choice of $s(\cdot,\cdot)$ is not important.

	A finite element formulation for the Oseen problem reads: Find $ \{\bm{u}_h,p_h\}\in \bm{X}=\bm{V}_h \times Q_h$ such that
	\begin{align*}
		a(\bm{u}_h,\bm{v}_h)+  a_s(\bm{u}_h,\bm{v}_h) +b(\bm{v}_h,p_h) &=\langle\bm{f},\bm{v}_h\rangle\quad \forall \bm{v}_h\in \bm{V}_h, \\
		b(\bm{u}_h,q_h)-\frac{1}{\nu+\gamma}s(p_h,q_h) & =0 \quad \forall q_h\in Q_h,
	\end{align*}
	where $a_s(\bm{u},\bm{v})=\gamma({\rm div}\bm{u}, {\rm div}\bm{v})$ and $\gamma\ge0$ is a grad-div stabilization parameter. 
	We assume that the flow is laminar and the mesh is sufficiently fine so that no additional closure or advection stabilization terms are needed.

	It is convenient to rewrite the finite element problem using the cumulative form 
	\begin{equation*}
		\mathcal{L}(\bm{w}_h,g_h;\bm{v}_h,q_h) = a(\bm{w}_h,\bm{v}_h)+ a_s(\bm{w}_h,\bm{v}_h) +b(\bm{v}_h,g_h)+b(\bm{w}_h,q_h)-\frac{1}{\nu+\gamma}s(g_h,q_h)
	\end{equation*}
	as follows: 
	Find $ \{\bm{u}_h,p_h\}\in \bm{X}$ such as 
	\begin{equation}\label{eq:bilinear-L}
		\mathcal{L}(\bm{u}_h,p_h;\bm{v}_h,q_h) =\langle\bm{f},\bm{v}_h\rangle  \quad  \forall  \{\bm{v}_h,q_h\}\in \bm{X}.
	\end{equation}

	For notation simplicity, let $x=\{\bm{w}_h,g_h\}$ and $y=\{\bm{v}_h,q_h\}$. Based on the symmetric part of $\mathcal{L}(x;y)$, we introduce the following products and norms on $\bm{X}$: 
	\begin{align*}
		(x,y)_{\mathcal{L}} = \nu(\nabla \bm{w}_h, \nabla \bm{v}_h) +\gamma ({\rm div}\bm{w}_h, {\rm div}\bm{v}_h) + \frac{1}{\nu+\gamma}\left[(g_h,q_h) +s(g_h,q_h)\right],\quad  \| x\|_{\mathcal{L}}^2 = (x,x)_{\mathcal{L}}.
	\end{align*}
	Further in the text, $a \gtrsim b$ means that there exists a constant $c$ independent of mesh size and other problem parameters such that  $a\geq cb$. Obviously, $a \lesssim b$ is defined as $b \gtrsim a$.
	To avoid nonessential technical details, it is not restrictive to assume for the rest of the paper that
	\begin{equation}\label{ass1}
	\|\bm{a}\|_{{L^\infty(\Omega)}}=1\quad\text{and}\quad\nu\lesssim 1,~~ \gamma\lesssim \rev{\nu^{-1}}.
	\end{equation}
  
	\begin{lemma}\label{thm:conti-stabi-conditions}
		The bilinear form $\mathcal{L}(x;y) $ satisfies the following continuity and stability property:
		\begin{equation}\label{eq:conti-L}
			\sup_{0\neq x\in \bm{X}}\sup_{0\neq y\in \bm{X}} \frac{\mathcal{L}(x;y)}{\|x\|_{\mathcal{L}} \|y\|_{\mathcal{L}}} \leq c_1,
		\end{equation}
		and
		\begin{equation}\label{eq:stabi-L}
			\inf_{0\neq x\in \bm{X}}\sup_{0\neq y\in \bm{X}} \frac{\mathcal{L}(x;y)}{\|x\|_{\mathcal{L}} \|y\|_{\mathcal{L}}} \geq c_2,
		\end{equation}
		with some positive mesh-independent constants
		\begin{equation*}
			c_1\lesssim (1+\nu^{-1}), \quad c_2 \gtrsim \nu(\nu+\gamma).
		\end{equation*}
	\end{lemma}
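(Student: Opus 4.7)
The continuity bound \eqref{eq:conti-L} is a straightforward accounting exercise: I would bound each of the six contributions to $\mathcal{L}(x;y)$ separately using Cauchy--Schwarz, the Poincar\'e inequality (to dominate $\|\bv_h\|$ and $\|\mathrm{div}\,\bv_h\|$ by $\|\nabla\bv_h\|$), and the definitions $\nu^{1/2}\|\nabla\bu_h\|\le\|x\|_{\mathcal{L}}$, $\gamma^{1/2}\|\mathrm{div}\,\bu_h\|\le\|x\|_{\mathcal{L}}$, and $(\nu+\gamma)^{-1/2}(\|p_h\|+s(p_h,p_h)^{1/2})\lesssim\|x\|_{\mathcal{L}}$. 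The diffusive, grad-div, and $s$-terms each yield a factor $\lesssim 1$; the pressure-velocity coupling $b(\bv_h,p_h)$ gives $\sqrt{(\nu+\gamma)/\nu}\lesssim 1+\nu^{-1/2}$; and the worst offender is the convective term $(\bm a\cdot\nabla\bu_h,\bv_h)$, which contributes $\nu^{-1}\|x\|_{\mathcal{L}}\|y\|_{\mathcal{L}}$ after invoking $\|\bm a\|_{L^\infty}=1$ and the assumption $\nu\lesssim 1$. Summing produces $c_1\lesssim 1+\nu^{-1}$.

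For the stability estimate \eqref{eq:stabi-L}, the plan is a classical two-step test-function construction. Step one: since $\bm a$ is divergence-free and $\bu_h$ vanishes on the boundary, $(\bm a\cdot\nabla\bu_h,\bu_h)=0$, so plugging $y=(\bu_h,-p_h)$ into $\mathcal{L}$ cancels the $b$-contributions and yields
\begin{equation*}
\mathcal{L}(x;(\bu_h,-p_h)) \,=\, \nu\|\nabla\bu_h\|^2+\gamma\|\mathrm{div}\,\bu_h\|^2+\tfrac{1}{\nu+\gamma}s(p_h,p_h),
\end{equation*}
which controls every piece of $\|x\|_{\mathcal{L}}^2$ except the $(\nu+\gamma)^{-1}\|p_h\|^2$ part. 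Step two: invoke the inf-sup condition \eqref{ineq:inf-sup-b} to pick $\bv^\star\in\bm V_h$ with $\|\nabla\bv^\star\|=\|p_h\|$ and
\begin{equation*}
b(\bv^\star,p_h) \,\ge\, \delta_0\|p_h\|^2-s(p_h,p_h)^{1/2}\|p_h\|.
\end{equation*}
Then I test with $y=(\bu_h+\alpha\bv^\star,-p_h)$ for a parameter $\alpha>0$ to be chosen, which adds the quantity
$\alpha\bigl[a(\bu_h,\bv^\star)+a_s(\bu_h,\bv^\star)+b(\bv^\star,p_h)\bigr]$
to the previous identity.

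The key balancing is where the constant $c_2\gtrsim\nu(\nu+\gamma)$ appears. I would bound the cross terms via $|a(\bu_h,\bv^\star)|\lesssim(\nu+1)\|\nabla\bu_h\|\|p_h\|$ (using Poincar\'e on $\bv^\star$ and $\|\bm a\|_\infty=1$) and $|a_s(\bu_h,\bv^\star)|\lesssim\gamma\|\mathrm{div}\,\bu_h\|\|p_h\|$, then apply Young's inequality to absorb $\tfrac{\nu}{2}\|\nabla\bu_h\|^2+\tfrac{\gamma}{2}\|\mathrm{div}\,\bu_h\|^2$ back into the symmetric part, leaving residual terms of order $\tfrac{\alpha^2}{\nu}\|p_h\|^2$. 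Similarly, Young's on $\alpha s(p_h,p_h)^{1/2}\|p_h\|$ uses up at most half of the $(\nu+\gamma)^{-1}s(p_h,p_h)$ already available. Choosing $\alpha\sim\nu$ (small enough that the $\alpha^2/\nu$ waste is dominated by $\tfrac{\alpha\delta_0}{2}\|p_h\|^2$, and $\alpha(\nu+\gamma)\lesssim\delta_0$ so the $s$-absorption is valid) produces
\begin{equation*}
\mathcal{L}(x;y) \,\gtrsim\, \tfrac{\nu}{2}\|\nabla\bu_h\|^2+\tfrac{\gamma}{2}\|\mathrm{div}\,\bu_h\|^2+\nu\delta_0\|p_h\|^2+\tfrac{1}{2(\nu+\gamma)}s(p_h,p_h).
\end{equation*}
Comparing term-by-term with $\|x\|_{\mathcal{L}}^2$, the weakest ratio is on $\|p_h\|^2$, namely $\nu\delta_0$ versus $(\nu+\gamma)^{-1}$, giving $\mathcal{L}(x;y)\gtrsim\nu(\nu+\gamma)\|x\|_{\mathcal{L}}^2$. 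A routine expansion using $\|\nabla\bv^\star\|=\|p_h\|$, Poincar\'e, and $\alpha\lesssim\nu\lesssim 1$ shows $\|y\|_{\mathcal{L}}\lesssim\|x\|_{\mathcal{L}}$, yielding $c_2\gtrsim\nu(\nu+\gamma)$.

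The main obstacle I expect is the bookkeeping in Step two: the $\alpha$-parameter must simultaneously (i) make $\alpha\delta_0/2$ dominate the Young-residual $C\alpha^2/\nu$ coming from the convective cross term, (ii) keep $\alpha/\delta_0$ small enough not to exhaust the $(\nu+\gamma)^{-1}s(p_h,p_h)$ reserve needed to handle $\alpha s(p_h,p_h)^{1/2}\|p_h\|$, and (iii) leave $\|y\|_{\mathcal{L}}$ comparable to $\|x\|_{\mathcal{L}}$. The choice $\alpha\sim\nu$ is what forces the final $\nu(\nu+\gamma)$ scaling in $c_2$, and tracking the constants through assumption \eqref{ass1} is where I would invest most of the care.
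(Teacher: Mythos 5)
Your proposal is correct and follows essentially the same route as the paper: term-by-term Cauchy--Schwarz plus Poincar\'e for continuity, and for stability the test function $y=(\bu_h+\alpha\bv^\star,-p_h)$ built from the inf-sup condition \eqref{ineq:inf-sup-b} with $\alpha\sim\nu$, Young's inequality to absorb the cross terms, and the observation that the weakest ratio sits on the $\|p_h\|^2$ term ($\nu$ against $(\nu+\gamma)^{-1}$). The only cosmetic difference is the normalization of the inf-sup element (the paper takes $\|\nabla\bm{w}_h\|=\delta_0\|p_h\|+s(p_h,p_h)^{1/2}$ so that the product telescopes to $\delta_0^2\|p_h\|^2-s(p_h,p_h)$ without an extra Young step), which does not change the argument.
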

	\begin{proof} We shall repeatedly use  the Poincar\'e–Friedrichs inequality:
		\begin{equation*}
			\|{\bm{v}}  \|\leq c_f \|\nabla {\bm{v}}\| \quad \forall {\bm{v}}  \in H^1_0(\Omega)^d.
		\end{equation*}

		For  $x=\{\bm{w}_h,g_h\}, y=\{\bm{v}_h,q_h\}$, we have
		\begin{align*}
			\mathcal{L}(x;y) &= a(\bm{w}_h,\bm{v}_h)+ a_s(\bm{w}_h,\bm{v}_h) +b(\bm{v}_h,g_h)+b(\bm{w}_h,q_h)-\tfrac{1}{\nu+\gamma}s(g_h,q_h)\\
			&\leq  a(\bm{w}_h,\bm{w}_h)^{\frac{1}{2}} a(\bm{v}_h,\bm{v}_h)^{\frac{1}{2}}+ \|\bm{a}\|_{L^{\infty}(\Omega)} \|\nabla\bm{w}_h\| \|{\bm{v}}_h  \| + a_s(\bm{w}_h,\bm{w}_h)^{\frac{1}{2}}a_s(\bm{v}_h,\bm{v}_h)^{\frac{1}{2}}  \\ 
			& \quad  + \|{\rm div}\bm{v}_h\| \|g_h\|+\|{\rm div}\bm{w}_h\| \|q_h\|  +\frac{1}{\nu+\gamma}s(g_h,g_h)^{\frac{1}{2}}s(q_h,q_h)^{\frac{1}{2}}\\
			& \leq a(\bm{w}_h,\bm{w}_h)^{\frac{1}{2}} a(\bm{v}_h,\bm{v}_h)^{\frac{1}{2}}+  \frac{c_f}{\nu} \|\sqrt{\nu}\nabla\bm{w}_h\| \|\sqrt{\nu}\nabla {\bm{v}}_h\| + a_s(\bm{w}_h,\bm{w}_h)^{\frac{1}{2}}a_s(\bm{v}_h,\bm{v}_h)^{\frac{1}{2}}  \\ 
			& \quad  + (\nu+\gamma)^{-\frac{1}{2}}\|g_h\| (\gamma^{\frac{1}{2}}\|{\rm div}\bm{v}_h\| +\nu^{\frac{1}{2}}\|\nabla \bm{v}_h\|) +   (\nu+\gamma)^{-\frac{1}{2}}\|q_h\| (\gamma^{\frac{1}{2}}\|{\rm div}\bm{w}_h\|\\
			& \quad +\nu^{\frac{1}{2}}\|\nabla \bm{w}_h\|) +\tfrac{1}{\nu+\gamma}s(g_h,g_h)^{\frac{1}{2}}s(q_h,q_h)^{\frac{1}{2}} \\	 		 	 
			& \leq c_1 \|\bm{w}_h,g_h\|_{\mathcal{L}} \|\bm{v}_h,q_h\|_{\mathcal{L}},
		\end{align*}
		where $c_1=3+\frac{c_f}{\nu}$. This proves \eqref{eq:conti-L}.
		
\rev{We shall also use $ \|\mbox{div}\,{\bm{v}}\|\le \|\nabla {\bm{v}}\| $ inequality, which follows for $\bm{v}\in H^1_0(\Omega)^d$ from
	$\Delta=\nabla\mbox{div}-\nabla\times\nabla\times$ identity and an integration by parts argument.}		
	We introduce the norm $	\|\bm{v}_h\|^2_\ast:= \nu\|\nabla \bm{v}_h\|^2+ \gamma \|{\rm div}\bm{v}_h\|^2$ and notice that 
		\begin{equation*}
			\|\bm{v}_h\|^2_\ast \leq (\nu+\gamma) \|\nabla\bm{v}_h\|^2.
		\end{equation*}
		Thanks to the inf-sup condition \eqref{ineq:inf-sup-b}, for any $g_h$, there exists $\hat{\bm{w}}_h$ such that \[b(\hat{\bm{w}}_h,g_h) \geq \left({\delta_0}\|g_h\| - s(g_h,g_h)^{\frac{1}{2}}\right)\|\nabla\hat{\bm{w}}_h\|.\] We may assume that $\|\nabla \hat{\bm{w}}_h\|=\delta_0\|g_h\|+s(g_h,g_h)^{\frac12}$. 
		For given $x=\{\bm{w}_h,g_h \}$, let us consider $y=\{\bm{v}_h,q_h \}$ with $\bm{v}_h=\bm{w}_h+ \theta \hat{\bm{w}}_h$ and $q_h=-g_h$. Then it holds that
		\begin{align*}
			\mathcal{L}&(x;y) = a(\bm{w}_h,\bm{w}_h+\theta \hat{\bm{w}}_h)+ a_s(\bm{w}_h,\bm{w}_h+\theta \hat{\bm{w}}_h) +b(\bm{w}_h+\theta \hat{\bm{w}}_h,g_h)-b(\bm{w}_h,g_h)+\tfrac{1}{\nu+\gamma}s(g_h,g_h)\\
			&= a(\bm{w}_h,\bm{w}_h)+ a_s(\bm{w}_h,\bm{w}_h)   +a(\bm{w}_h,\theta \hat{\bm{w}}_h)+a_s(\bm{w}_h,\theta \hat{\bm{w}}_h)+b(\theta \hat{\bm{w}}_h,g_h)+\tfrac{1}{\nu+\gamma}s(g_h,g_h) \\
			&=\|\bm{w}_h\|^2_\ast +\tfrac{1}{\nu+\gamma}s(g_h,g_h) +\nu \theta (\nabla \bm{w}_h, \nabla \hat{\bm{w}}_h) +\gamma \theta ({\rm div}\bm{w}_h, {\rm div}\hat{\bm{w}}_h)
			+ \theta (\bm{a} \cdot \nabla \bm{w}_h, \hat{\bm{w}}_h) +b(\theta \hat{\bm{w}}_h,g_h)  \\
			& \geq  \|\bm{w}_h\|^2_\ast +\tfrac{1}{\nu+\gamma}s(g_h,g_h) -\theta \|\bm{w}_h\|_\ast \| \hat{\bm{w}}_h\|_\ast - \theta \|\nabla \bm{w}_h\| \|\hat{\bm{w}}_h\|+
			\theta  \left(\delta_0\|g_h\| - s(g_h,g_h)^{\frac{1}{2}}\right)\|\nabla\hat{\bm{w}}_h\|  \\
			& \geq  \|\bm{w}_h\|^2_\ast +\tfrac{1}{\nu+\gamma}s(g_h,g_h) - \frac14\|\bm{w}_h\|_\ast^2 -\theta^2 \| \hat{\bm{w}}_h\|_\ast^2 - \frac\nu4 \|\nabla \bm{w}_h\|^2  - \frac{\theta^2} {\nu}\|\hat{\bm{w}}_h\|^2+
			\theta   \left(\delta_0^2\|g_h\|^2 - s(g_h,g_h)\right) \\
			& \geq  \frac12\|\bm{w}_h\|^2_\ast +\tfrac{1}{\nu+\gamma}s(g_h,g_h) -\theta^2(\nu+\gamma) \| \nabla\hat{\bm{w}}_h\|^2  - \frac{\theta^2} {\nu}\yh{c_f^2}\|\nabla\hat{\bm{w}}_h\|^2+
			\theta   \left(\delta_0^2\|g_h\|^2 - s(g_h,g_h)\right)  \\
			& \geq  \frac12\|\bm{w}_h\|^2_\ast +(\tfrac{1}{\nu+\gamma}-\theta-2\theta^2(\nu+\gamma) - 2\theta^2 \nu^{-1}\yh{c_f^2})s(g_h,g_h)+
			\theta(1-2\theta(\nu+\gamma) - 2\theta \nu^{-1}\yh{c_f^2})\delta_0^2\|g_h\|^2.
		\end{align*}
		Letting $\theta= \tilde c \nu$	with sufficiently small  $\tilde c\simeq1$ \rev{and using \eqref{ass1}}, we obtain
		\[
		\mathcal{L}(x;y)\gtrsim  \|\bm{w}_h\|^2_\ast+\frac{1}{\nu+\gamma}s(g_h,g_h)+\nu\|g_h\|^2. 
		\]
		This and  $\|\bm{w}_h,g_h\|_{\mathcal{L}}\gtrsim\|\bm{v}_h,q_h\|_{\mathcal{L}}$ prove the result in \eqref{eq:stabi-L}.
	\end{proof}

	\section{Algebraic problem and Schur complement preconditioner}\label{sec:algebraic-preconditioners}
	Let $\{\psi_i\}_{i=1,\dots,n}$ and $\{\phi_i\}_{i=1,\dots,m}$ be bases in $\bV_h$ and $Q_h$, respectively. Define the following matrices: 
	\begin{align*}
		D=\{D_{ij}\},&~ D_{ij}=(\nabla \psi_i, \nabla \psi_j), \quad~  	N=\{N_{ij}\},~ N_{ij}= (\bm{a} \cdot \nabla \psi_j, \psi_i),\\  
		B=\{B_{ij}\}, &~ B_{ij}=-(\phi_j,{\rm div}\psi_i), \quad C=\{C_{ij}\},~ C_{ij}= (\nu+\gamma)^{-1}s(\phi_i,\phi_j), \nonumber \\
		G=\{G_{ij}\}, &~ G_{ij}=({\rm div}\psi_i, {\rm div}\psi_j), \quad A_{\gamma}=\nu D+N+\gamma G,\quad A_s = \frac{1}{2}(A_{\gamma} + A_{\gamma}^T)=\nu D +\gamma G.  \nonumber	
	\end{align*}
	Note that $A_s$ is a symmetric and positive definite matrix.
	
	For a natural $\bu$--$p$ order of unknowns, the finite element method results in a system of algebraic equation with matrices having the block structure:
	\begin{equation}
		\mathcal{K} \yh{x}= \begin{pmatrix}
			A_\gamma & ~B^T\\
			B &  -C
		\end{pmatrix}\yh{x} =b,
	\end{equation}
	where $A_{\gamma}\in\mathbb{R}^{n\times n}, C\in \mathbb{R}^{m\times m}$, and $B\in \mathbb{R}^{m\times n}$.
	
	We are also interested in the Schur complement for $\mathcal{K}$:
	\[
	S= B A_{\gamma}^{-1}B^T+C.
	\]
	
	Let $M$ be the pressure mass matrix associated with element space $Q_h$. We introduce the following preconditioner for $S$: 
	\begin{equation}\label{Sh}
		\widehat{S}=(\nu+\gamma)^{-1} M + C,
	\end{equation}
	and a block diagonal matrix 
	\begin{equation}\label{Mm}
		\mathcal{M} =\begin{pmatrix}
			A_s  & 0\\
			0 &   \widehat{S}
		\end{pmatrix}.
	\end{equation}
	Note that $\mathcal{M}$ is not a preconditioner for $\mathcal{K}$ we are interested in. This matrix  is introduced to define convenient inner products to work with.
	We shall adopt some standard notations:
	Let $\langle\cdot,\cdot\rangle$	be the Euclidean inner product and $\|\cdot\|_{\ell^2}=\langle\cdot,\cdot\rangle^{\frac{1}{2}}$, also
	$\langle \cdot,\cdot\rangle_{F}=\langle F \cdot,\cdot\rangle$ and $\|\cdot\|_F=\langle \cdot,\cdot\rangle_{F}^{\frac12}$ denote an inner product and norm on $\mathbb{R}^k$ given a  positive definite symmetric matrix $F\in \mathbb{R}^{k\times k}$.

	\begin{theorem} The stability and continuity properties from Lemma~\ref{thm:conti-stabi-conditions} are equivalent to the following conditions for the matrix $\mathcal{K}$:
		\begin{equation}\label{eq:conti-K}
			\sup_{0\neq x\in\mathbb{R}^{n+m}}\sup_{0\neq y\in\mathbb{R}^{n+m}} \frac{\langle \mathcal{K} x,y\rangle}{ \|x\|_{\mathcal{M}} \|y\|_{\mathcal{M}}} \leq c_1,
		\end{equation}
		and 
		\begin{equation}\label{eq:stabi-K}
			\inf_{0\neq x\in\mathbb{R}^{n+m}}\sup_{0\neq y\in\mathbb{R}^{n+m}}	\frac{\langle \mathcal{K} x,y \rangle}{\|x\|_{\mathcal{M}} \|y\|_{\mathcal{M}}} \geq c_2,
		\end{equation}
		where the constants $c_1$ and $c_2$ are the same as those in \eqref{eq:conti-L} and \eqref{eq:stabi-L}, respectively.
	\end{theorem}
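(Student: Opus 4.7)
The plan is to observe that the theorem is a change-of-representation result: the basis expansion $\{\psi_i\},\{\phi_i\}$ induces a linear bijection $\Phi:\bm{X}\to\mathbb{R}^{n+m}$, $x=\{\bm{u}_h,p_h\}\mapsto \mathbf{x}=(\mathbf{u}^{T},\mathbf{p}^{T})^{T}$, and I intend to show that (i) $\Phi$ is an isometry $(\bm{X},\|\cdot\|_{\mathcal{L}})\to(\mathbb{R}^{n+m},\|\cdot\|_{\mathcal{M}})$, and (ii) it intertwines the bilinear forms, $\mathcal{L}(x;y)=\langle \mathcal{K}\mathbf{x},\mathbf{y}\rangle$. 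Once these two identities are established, the sup/inf quantities in \eqref{eq:conti-K} and \eqref{eq:stabi-K} coincide pointwise with those in \eqref{eq:conti-L} and \eqref{eq:stabi-L}, yielding equivalence of the pairs of conditions with the same constants $c_1,c_2$.

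First I would verify the norm identity. By definition of the matrices $D$ and $G$, $\langle A_s\mathbf{u},\mathbf{u}\rangle=\nu\|\nabla\bm{u}_h\|^{2}+\gamma\|{\rm div}\,\bm{u}_h\|^{2}$; by definition of $M$ (the pressure mass matrix) and $C$, $\langle \widehat{S}\mathbf{p},\mathbf{p}\rangle=(\nu+\gamma)^{-1}\bigl(\|p_h\|^{2}+s(p_h,p_h)\bigr)$. Since $\mathcal{M}$ is block diagonal, summing these two contributions reproduces exactly $\|x\|_{\mathcal{L}}^{2}$, so $\|\mathbf{x}\|_{\mathcal{M}}=\|x\|_{\mathcal{L}}$.

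Next I would verify the bilinear-form identity block by block. For the $(1,1)$ block, using $A_\gamma=\nu D+N+\gamma G$ and the definitions of $D,N,G$ gives
\begin{equation*}
\langle A_\gamma\mathbf{u},\mathbf{v}\rangle=\nu(\nabla\bm{u}_h,\nabla\bm{v}_h)+(\bm{a}\cdot\nabla\bm{u}_h,\bm{v}_h)+\gamma({\rm div}\,\bm{u}_h,{\rm div}\,\bm{v}_h)=a(\bm{u}_h,\bm{v}_h)+a_s(\bm{u}_h,\bm{v}_h).
\end{equation*}
For the off-diagonal blocks, the definition of $B$ and transposition yield $\langle B^{T}\mathbf{p},\mathbf{v}\rangle=b(\bm{v}_h,p_h)$ and $\langle B\mathbf{u},\mathbf{q}\rangle=b(\bm{u}_h,q_h)$, while $-\langle C\mathbf{p},\mathbf{q}\rangle=-(\nu+\gamma)^{-1}s(p_h,q_h)$. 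Summing all four blocks recovers $\mathcal{L}(x;y)$.

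With these two identities the conclusion is immediate: for any $x,y\in\bm{X}$ the ratio $\mathcal{L}(x;y)/(\|x\|_{\mathcal{L}}\|y\|_{\mathcal{L}})$ equals $\langle\mathcal{K}\mathbf{x},\mathbf{y}\rangle/(\|\mathbf{x}\|_{\mathcal{M}}\|\mathbf{y}\|_{\mathcal{M}})$, and bijectivity of $\Phi$ lets one take sup/inf on either side over the corresponding space. There is no real obstacle here — the only point that requires care is bookkeeping of signs and transpositions in the off-diagonal blocks (in particular making sure that $-(q_h,{\rm div}\bm{v}_h)$ matches the convention $B_{ij}=-(\phi_j,{\rm div}\,\psi_i)$), and the observation that the $C$ block carries the same negative sign in both $\mathcal{K}$ and $\mathcal{L}$.
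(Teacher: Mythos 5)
Your proof is correct, and it is exactly the argument the paper has in mind: the paper states this theorem without proof, treating it as an immediate consequence of the definitions of $D$, $N$, $G$, $B$, $C$, $A_s$, $\widehat{S}$ and $\mathcal{M}$, and your two identities ($\Phi$ is an isometry onto $(\mathbb{R}^{n+m},\|\cdot\|_{\mathcal{M}})$ and $\mathcal{L}(x;y)=\langle\mathcal{K}\mathbf{x},\mathbf{y}\rangle$) are precisely the routine verification being left to the reader. The only caveat is the one you already flag: the paper's index convention for $B_{ij}$ is written as if $B$ were $n\times m$ while $\mathcal{K}$ uses $B\in\mathbb{R}^{m\times n}$, so one must read the definition with indices swapped for the block structure to match.
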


	\subsection{One eigenvalue problem}
	To understand the properties of the Schur complement preconditioner \eqref{Sh}, it is insightful to analyze the eigenvalue problem
	\begin{equation}\label{SSh}
		S \varphi=\lambda\widehat{S}\varphi.  
	\end{equation}
	
	\begin{theorem}\label{thm:Schur-eigs-bound}
		The eigenvalues from \eqref{SSh} satisfy the following bounds
		\begin{equation*}
			\nu(\nu+\gamma)  \lesssim \Re(\lambda), \quad  |\lambda| \leq 1.
		\end{equation*}
	\end{theorem}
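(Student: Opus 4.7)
The plan is to work with an eigenvector $p$ of $Sp=\lambda\widehat S p$, so that $\lambda = p^*Sp/p^*\widehat S p$, and reduce everything to controlling the quadratic form $v^*A_\gamma^{-1}v$ with $v := B^T p$, since $p^*Sp = v^*A_\gamma^{-1}v + p^*Cp$. The algebraic identity $\tfrac12(A_\gamma^{-1}+A_\gamma^{-T})=A_\gamma^{-1}A_s A_\gamma^{-T}$ yields the organizing relation
\begin{equation*}
\Re(v^*A_\gamma^{-1}v) \;=\; u^*A_s u,\qquad u:=A_\gamma^{-T}v,
\end{equation*}
in which $u$ is recognized as the coefficient vector of a velocity $\bm u_h\in\bm V_h$ satisfying the adjoint discrete Oseen problem
\begin{equation*}
\nu(\nabla\bm w_h,\nabla\bm u_h)+(\bm a\cdot\nabla\bm w_h,\bm u_h)+\gamma({\rm div}\bm w_h,{\rm div}\bm u_h) \;=\; b(\bm w_h,p_h)\qquad\forall\bm w_h\in\bm V_h,
\end{equation*}
so that $u^*A_s u = \|\bm u_h\|_\ast^2$ with the norm $\|\cdot\|_\ast$ introduced in the proof of Lemma~\ref{thm:conti-stabi-conditions}.

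For the upper bound I would proceed in two steps. First, the factorization $A_\gamma = A_s^{1/2}(I+T)A_s^{1/2}$ with $T := A_s^{-1/2}NA_s^{-1/2}$ real and skew-symmetric gives $(I+T)(I+T)^T = I + T^TT \succeq I$, so $\|(I+T)^{-1}\|_2\le 1$ and consequently $|v^*A_\gamma^{-1}v|\le v^*A_s^{-1}v$. Second, writing $v^*A_s^{-1}v = \sup_{\bm w_h}b(\bm w_h,p_h)^2/\|\bm w_h\|_\ast^2$, the bound $|b(\bm w_h,p_h)|\le\|p_h\|\|{\rm div}\bm w_h\|$ combined with $\|\bm w_h\|_\ast^2 \geq \max(\nu/d,\gamma)\|{\rm div}\bm w_h\|^2\gtrsim(\nu+\gamma)\|{\rm div}\bm w_h\|^2$ yields $v^*A_s^{-1}v\lesssim(\nu+\gamma)^{-1}\|p_h\|^2$. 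Combining the two estimates gives $|p^*Sp|\lesssim(\nu+\gamma)^{-1}\|p_h\|^2 + p^*Cp = p^*\widehat S p$ and hence $|\lambda|\le 1$.

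For the lower bound I would exploit the adjoint equation together with the inf-sup condition \eqref{ineq:inf-sup-b}. Choose $\bm w_h^*\in\bm V_h$ with $\|\nabla\bm w_h^*\|=1$ and $b(\bm w_h^*,p_h)\ge\delta_0\|p_h\|-s(p_h,p_h)^{1/2}$, substitute it into the adjoint equation, and estimate the three LHS terms by Cauchy–Schwarz, the Poincar\'e–Friedrichs inequality, $\|{\rm div}\bm w_h^*\|\le\sqrt d\,\|\nabla\bm w_h^*\|$, and $\|\nabla\bm u_h\|\le\nu^{-1/2}\|\bm u_h\|_\ast$, invoking the standing assumption \eqref{ass1}. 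The dominant (convection) term produces $|b(\bm w_h^*,p_h)|\lesssim\nu^{-1/2}\|\bm u_h\|_\ast$, after which Young's inequality delivers
\begin{equation*}
\|p_h\|^2 \;\lesssim\; s(p_h,p_h) + \nu^{-1}\|\bm u_h\|_\ast^2 \;=\; (\nu+\gamma)\,p^*Cp + \nu^{-1}u^*A_s u.
\end{equation*}
Multiplying by $\nu$, adding $\nu(\nu+\gamma)p^*Cp$ to both sides, and using $\nu(\nu+\gamma)\lesssim 1$ from \eqref{ass1} to absorb the extra factor, I obtain $\nu(\nu+\gamma)p^*\widehat S p \lesssim u^*A_s u + p^*Cp = \Re(p^*Sp)$, i.e.\ $\Re(\lambda)\gtrsim\nu(\nu+\gamma)$. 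The principal obstacle I anticipate is precisely this $\nu^{-1/2}$ loss in the convection estimate: it is intrinsic to energy-type Oseen bounds and fixes the final scaling, but the fact that $\|\cdot\|_\ast$ carries both the diffusion and the grad-div contribution lets the estimate on $\bm u_h$ recover the extra $(\nu+\gamma)$ factor exactly, which is why one obtains the stated $\nu(\nu+\gamma)$ rather than merely $\nu^2$.
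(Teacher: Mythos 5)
Your argument is correct in substance and, for the lower bound on $\Re(\lambda)$, follows essentially the same route as the paper: both reduce $\Re(p^*Sp)$ to $\|\bm u_h\|_\ast^2+\tfrac{1}{\nu+\gamma}s(p_h,p_h)$ via the discrete velocity problem, then test that problem with the inf-sup test function and accept the $\nu^{-1/2}$ loss from the convection term (the paper's appeal to the Bendixson theorem is just your Rayleigh-quotient identity $\lambda=p^*Sp/p^*\widehat Sp$ in disguise, and working with $A_\gamma^{-T}$ rather than $A_\gamma^{-1}$ is immaterial). For the upper bound you take a genuinely different, purely algebraic route: the factorization $A_\gamma=A_s^{1/2}(I+T)A_s^{1/2}$ with $T$ skew yields $|v^*A_\gamma^{-1}v|\le v^*A_s^{-1}v$, and a duality characterization of $v^*A_s^{-1}v$ finishes the job; the paper instead tests the velocity equation with $\bm u_h$ to obtain $(\nu+\gamma)\|{\rm div}\bm u_h\|\le\|p_h\|$ and bounds the two-vector form $\langle Sp,q\rangle$ directly. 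Your version is economical in that only the one-vector bound is needed, but two small points deserve attention: (i) the skew-symmetry of $N$ (hence of $T$) relies on ${\rm div}\,\bm a=0$ together with the homogeneous Dirichlet condition --- the same fact the paper uses when writing $a(\bm u_h,\bm u_h)=\nu\|\nabla\bm u_h\|^2$ --- so it should be stated; (ii) by invoking $\|{\rm div}\bm w_h\|\le\sqrt d\,\|\nabla\bm w_h\|$ you only obtain $|\lambda|\lesssim 1$, whereas the theorem asserts $|\lambda|\le 1$ with constant one; this is repaired by the sharp inequality $\|{\rm div}\bm w\|\le\|\nabla\bm w\|$ on $H^1_0(\Omega)^d$ (which the paper uses), giving $\|\bm w_h\|_\ast^2\ge(\nu+\gamma)\|{\rm div}\bm w_h\|^2$ and hence $|p^*Sp|\le\tfrac{1}{\nu+\gamma}\|p_h\|^2+p^*Cp=p^*\widehat Sp$ exactly.
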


	\begin{proof}
		The proof partially follows \cite{olshanskii2022recycling}. 
		For $\varphi \in \mathbb{R}^m$, the corresponding finite element function is $\varphi_h \in Q_h$. For $w \in \mathbb{R}^n$, the corresponding  finite element function is $\bm{w}_h \in \bm{V}_h$.  According to Bendixson theorem,
		\begin{equation*}
			\inf_{0\neq \varphi\in \mathbb{R}^m} \frac{\left\langle \frac{1}{2} (S+S^T)\varphi,\varphi \right\rangle}{\langle \hat{S}\varphi,\varphi\rangle} \leq \Re(\lambda).
		\end{equation*}
		Note that $\left\langle \frac{1}{2} (S+S^T)\varphi,\varphi \right\rangle=\langle S\varphi,\varphi \rangle$. For a given $\varphi$, define $w$ by $A_{\gamma}w=B^T\varphi$. Then, 
		\begin{align*}
			\langle S\varphi,\varphi \rangle&=\langle Bw,\varphi\rangle + \langle C\varphi,\varphi\rangle,  \\
			A_{\gamma}w &=B^T\varphi.
		\end{align*}
		We rewrite the above formulas in the finite element notations:
		\begin{align}
			\langle S\varphi,\varphi\rangle &=b(\bm{w}_h,\varphi_h) + \frac1{\nu+\gamma}s(\varphi_h,\varphi_h), \label{eq:FEMs}\\
			b(\bm{v}_h,\varphi_h)& = a (\bm{w}_h, \bm{v}_h) + a_s(\bm{w}_h,\bm{v}_h),\quad \forall \bm{v}_h \in  \bm{V}_h.\label{eq:FEMb}
		\end{align}
		Taking $\bm{v}_h= \bm{w}_h$ in \eqref{eq:FEMb} and substituting $b(\bm{w}_h,\varphi_h)$ into \eqref{eq:FEMs} leads to
		\begin{equation*}
			\langle S\varphi,\varphi\rangle  =  a (\bm{w}_h, \bm{w}_h) + a_s(\bm{w}_h,\bm{w}_h)+ \frac1{\nu+\gamma}s(\varphi_h,\varphi_h).
		\end{equation*}

		From \eqref{ineq:inf-sup-b} it follows that there exists $\hat{\bm{v}}_h \in \bm{V}_h$ with $\|\nabla \hat{\bm{v}}_h \|=1$ such that 
		\begin{equation*}
			\delta \|\varphi_h\|^2 \leq  b(\hat{\bm{v}}_h ,\varphi_h)^2 + s( \varphi_h,\varphi_h).
		\end{equation*} 
		Recall that $\|\bm{a}\|_{L^{\infty}(\Omega)}=1$  and  ${\rm div}\bm{a}=0$. Taking $\hat{\bm{v}}_h$ as a test function in \eqref{eq:FEMb} and using Cauchy-Schwarz and Poincar\'e inequalities,  we get 
		\begin{align*}
			\delta \|\varphi_h\|^2 &\leq  ( a(\bm{w}_h, \hat{\bm{v}}_h) + a_s(\bm{w}_h,\hat{\bm{v}}_h))^2 + s( \varphi_h,\varphi_h) \\
			&\leq  \left( a(\bm{w}_h,\bm{w}_h)^{\frac{1}{2}} a(\hat{\bm{v}}_h, \hat{\bm{v}}_h)^{\frac{1}{2}} +\|\bm{a}_h\|_{L^{\infty}(\Omega)} \|\nabla\bm{w}_h\| \|\hat{\bm{v}}_h  \| +\gamma \|{\rm div}\bm{w}\| \|{\rm div}\hat{\bm{v}}_h \| \right)^2  + 
			s( \varphi_h,\varphi_h)\\
			&\leq \left( a(\bm{w}_h,\bm{w}_h)^{\frac{1}{2}} a(\hat{\bm{v}}_h, \hat{\bm{v}}_h)^{\frac{1}{2}} + c_f\|\bm{a}_h\|_{L^{\infty}(\Omega)} \|\nabla\bm{w}_h\| \| \nabla \hat{\bm{v}}_h  \| +\gamma \|{\rm div}\bm{w}_h\| \| \nabla \hat{\bm{v}}_h \| \right)^2 +  s( \varphi_h,\varphi_h)\\
			& \leq \left(  \sqrt{\nu}a(\bm{w}_h,\bm{w}_h)^{\frac{1}{2}}  +  \frac{c_f}{\sqrt{\nu}} a(\bm{w}_h,\bm{w}_h)^{\frac{1}{2}}  +\gamma \|{\rm div}\bm{w}_h\|\right)^2 +s( \varphi_h,\varphi_h),	 	
		\end{align*}
		where in the last inequality we used 
		\begin{equation*}
			a(\hat{\bm{v}}_h, \hat{\bm{v}}_h) =\nu \|\nabla\hat{\bm{v}}_h\|^2=\nu, \quad \|{\rm div}\hat{\bm{v}}_h \| \leq \|\nabla \hat{\bm{v}}_h \|=1, \quad \|\nabla \bm{w}_h\| =\frac{1}{\sqrt{\nu}}a(\bm{w}_h,\bm{w}_h)^{\frac{1}{2}}.
		\end{equation*}
		Thus, we get
		\begin{align*}
			\delta \|\varphi_h\|^2  &\leq 2 \left( (\sqrt{\nu}+ c_f/\sqrt{\nu})^2a(\bm{w}_h,\bm{w}_h)+\gamma^2 \|{\rm div}\bm{w}_h\|^2 \right)+s( \varphi_h,\varphi_h) \\
			& \leq 2 \left( 2(\nu+ c_f^2 \nu^{-1}) a(\bm{w}_h,\bm{w}_h)+ \rev{\gamma} a_s(\bm{w}_h,\bm{w}_h) \right)+ (\nu+\gamma)\frac{1}{\nu+\gamma} s(\varphi_h,\varphi_h)\\
			& \leq 4(\nu+1+\rev{\gamma}+ c_f^2 \nu^{-1}) \langle S\varphi,\varphi\rangle \lesssim (1+\nu^{-1}) \langle S\varphi,\varphi\rangle.
		\end{align*}
		From this, assumptions \eqref{ass1} and the definition of $S$, it  follows that
		\begin{equation*}
			\nu (\nu+\gamma) \langle ((\nu+\gamma)^{-1} M +C)\varphi, \varphi\rangle \lesssim  \langle S\varphi,\varphi\rangle,
		\end{equation*}
		which can be rewritten as
		\begin{equation}\label{ineq:S-larger-than-hats}
			\nu (\nu+\gamma)   \lesssim \frac{\langle S\varphi,\varphi\rangle}{ \langle \widehat{S}\varphi, \varphi\rangle}.
		\end{equation}
		Next, we show the bound on $|\lambda|$.	
		\begin{equation*}
			|\lambda| \leq \|\widehat{S}^{-\frac{1}{2}} S \widehat{S}^{-\frac{1}{2}}\|=\sup_{0\neq \varphi,q\in\mathbb{R}^m} \frac{\langle \widehat{S}^{-\frac{1}{2}} S \widehat{S}^{-\frac{1}{2}}\varphi,q\rangle}{ \|\varphi\|_{\ell^2}\|q\|_{\ell^2}} = \sup_{0\neq \varphi,q\in\mathbb{R}^m} \frac{\langle S\varphi,q\rangle}{ \|\widehat{S}^{\frac{1}{2}}\varphi \|_{\ell^2}\|\widehat{S}^{\frac{1}{2}}q\|_{\ell^2}}.
		\end{equation*}
		In the finite element notations, we rewrite
		\begin{align*}
			\langle S\varphi,q\rangle &=b(\bm{w}_h,q_h) + \yh{\frac1{\nu+\gamma}}s(\varphi_h,q_h),  \\
			b(\bm{v}_h,\varphi_h)& = a (\bm{w}_h, \bm{v}_h) + a_s(\bm{w}_h,\bm{v}_h),\quad \forall \bm{v}_h \in  \bm{V}_h. 
		\end{align*}
		Using Cauchy-Schwarz inequality for the above equations, we get
		\begin{align}
			\langle S\varphi,q\rangle & \leq  \|{\rm div}\bm{w}_h\| \|q_h\|+ \frac1{\nu+\gamma}s(\varphi_h,\varphi_h)^{\frac{1}{2}} s(q_h,q_h)^{\frac{1}{2}}, \label{eq:Spq}  \\
			a(\bm{w}_h, \bm{w}_h) + \gamma \|{\rm div}\bm{w}_h\|^2 &=b(\bm{w}_h,\varphi_h)\leq \|{\rm div}\bm{w}_h \| \|\varphi_h\|. \label{eq:auu}
		\end{align}
		Using $\|{\rm div}\bm{w}_h\| \leq \|\nabla \bm{w}_h\|$ and $ a(\bm{w}_h, \bm{w}_h)=\nu \|\nabla\bm{w}_h\|^2$,  we obtain from  \eqref{eq:auu} that
		\begin{equation*}
			(\nu+\gamma) \|{\rm div}\bm{w}_h\| \leq \|\varphi_h\|. 
		\end{equation*}
		Using the above inequality, we can rewrite \eqref{eq:Spq} as
		\begin{align*}
			\langle S\varphi,q\rangle &\leq  \frac{1}{\nu+\gamma}	\left[\|\varphi_h\| \|q_h\| + s(\varphi_h,\varphi_h)^{\frac{1}{2}} s(q_h,q_h)^{\frac{1}{2}}\right]\\
			&\leq  	\|\frac{1}{\sqrt{\nu+\gamma}} \varphi_h\| \|\frac{1}{\sqrt{\nu+\gamma} }q_h\| + \frac1{\nu+\gamma}s(\varphi_h,\varphi_h)^{\frac{1}{2}} s(q_h,q_h)^{\frac{1}{2}}\\
			&\leq  \|\widehat{S}^{\frac{1}{2}}\varphi\|_{\ell^2}\|\widehat{S}^{\frac{1}{2}}q\|_{\ell^2}.
		\end{align*}
		It follows that 
		\begin{equation}\label{inq:S-hatS-RQ}
			\frac{\langle S\varphi,q\rangle}{ \|\widehat{S}^{\frac{1}{2}}\varphi\|_{\ell^2}\|\widehat{S}^{\frac{1}{2}}q\|_{\ell^2}}\leq 1,
		\end{equation}
		which is the desired result.
	\end{proof}
	Theorem \ref{thm:Schur-eigs-bound} shows bounds similar to those found in the literature on the AL approach with inf-sup stable finite elements. In particular, we see that all the eigenvalues of the preconditioned Schur complement are bounded on the right half of the complex plane, independently of the mesh parameter. Moreover, the bounds improve as $\gamma$ increases.

	\section{Block preconditioner and convergence analysis}\label{sec:field-of-values2}

	For a matrix $\mathcal{B} \in \mathbb{R}^{n+m}$,  denote 
	\begin{equation*}
		\mu(\mathcal{B})= \inf_{0\neq z\in \mathbb{R}^{n+m}} \frac{\langle \mathcal{B}z, z\rangle_{\mathcal{M}^{-1}} }{ \langle z,z \rangle_{\mathcal{M}^{-1}}}.
	\end{equation*}
	One can show that $	\mu(\mathcal{B})	\mu(\mathcal{B}^{-1})\le 1$. The following lemma provides an upper bound for the convergence of preconditioned GMRES:
	\begin{lemma}[\cite{starke1997field}]\label{Lres}
		The residual of the preconditioned GMRES method for $\mathcal{K} x =b$ with the preconditioner $ \mathcal{F}$  satisfies
		\begin{equation}
			\frac{\| r_k\|_{\mathcal{M}^{-1}}} {\| r_0\|_{\mathcal{M}^{-1}}}  \leq \left(1- \mu(\mathcal{K} \mathcal{F}^{-1}) \mu(\mathcal{F}\mathcal{K}^{-1})\right)^{k/2} .
		\end{equation}
	\end{lemma}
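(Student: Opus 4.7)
My plan is to exploit the residual-minimization property of preconditioned GMRES in the $\mathcal{M}^{-1}$-inner product, reducing the analysis to a one-step contraction that can be estimated from below by $\mu(\mathcal{K}\mathcal{F}^{-1})\mu(\mathcal{F}\mathcal{K}^{-1})$. Writing $\mathcal{B} := \mathcal{K}\mathcal{F}^{-1}$, right-preconditioned GMRES performed with an Arnoldi process that is orthogonal in $\langle\cdot,\cdot\rangle_{\mathcal{M}^{-1}}$ produces residuals of the form $r_k = p_k(\mathcal{B}) r_0$ with $p_k(0)=1$, and $\|r_k\|_{\mathcal{M}^{-1}}$ minimal over all such polynomials of degree at most $k$. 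The proof then consists of a one-step contraction, a pointwise field-of-values inequality, and an induction on $k$.

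For the one-step contraction, at step $k+1$ I would test the admissible polynomial $p(t) = q_k(t)(1-\alpha t)$, where $q_k$ is the optimal polynomial at step $k$, obtaining
\begin{equation*}
    \|r_{k+1}\|_{\mathcal{M}^{-1}}^2 \le \min_\alpha \|(I - \alpha\mathcal{B}) r_k\|_{\mathcal{M}^{-1}}^2.
\end{equation*}
Expanding and minimizing the resulting scalar quadratic in $\alpha$ yields
\begin{equation*}
    \|r_{k+1}\|_{\mathcal{M}^{-1}}^2 \le \|r_k\|_{\mathcal{M}^{-1}}^2 \left(1 - \frac{\langle \mathcal{B} r_k, r_k\rangle_{\mathcal{M}^{-1}}^2}{\|r_k\|_{\mathcal{M}^{-1}}^2 \,\|\mathcal{B} r_k\|_{\mathcal{M}^{-1}}^2}\right).
\end{equation*}

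Next I would establish, for every $v\neq 0$, the pointwise bound
\begin{equation*}
    \mu(\mathcal{B})\,\mu(\mathcal{B}^{-1}) \le \frac{\langle \mathcal{B} v, v\rangle_{\mathcal{M}^{-1}}^2}{\|v\|_{\mathcal{M}^{-1}}^2 \,\|\mathcal{B} v\|_{\mathcal{M}^{-1}}^2}.
\end{equation*}
The definition of $\mu$ gives $\mu(\mathcal{B}) \le \langle \mathcal{B} v, v\rangle_{\mathcal{M}^{-1}}/\|v\|_{\mathcal{M}^{-1}}^2$ directly. Substituting $w=\mathcal{B} v$ into the analogous bound for $\mu(\mathcal{B}^{-1})$ and using the symmetry of the real inner product $\langle\cdot,\cdot\rangle_{\mathcal{M}^{-1}}$ yields $\mu(\mathcal{B}^{-1}) \le \langle \mathcal{B} v, v\rangle_{\mathcal{M}^{-1}}/\|\mathcal{B} v\|_{\mathcal{M}^{-1}}^2$; multiplying these two inequalities gives the claim. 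Applying it at $v=r_k$ inside the contraction produces $\|r_{k+1}\|_{\mathcal{M}^{-1}}^2 \le (1-\mu(\mathcal{B})\mu(\mathcal{B}^{-1}))\,\|r_k\|_{\mathcal{M}^{-1}}^2$, and induction on $k$ delivers the stated bound after taking square roots.

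The main obstacle is conceptual rather than computational: the whole argument relies on interpreting preconditioned GMRES in the non-standard inner product $\langle\cdot,\cdot\rangle_{\mathcal{M}^{-1}}$, which must be implemented so that the residual norm being minimized and the Arnoldi orthogonalization are consistent; once that is in place, the standard polynomial-optimality statement transfers without change. A secondary caveat is that the estimate is only informative when $\mu(\mathcal{B})>0$, i.e.\ the field of values of $\mathcal{B}$ in the $\mathcal{M}^{-1}$-inner product is bounded away from the origin, which is precisely where the continuity and stability estimates of Lemma~\ref{thm:conti-stabi-conditions}, and the Schur-complement bounds of Theorem~\ref{thm:Schur-eigs-bound}, will be needed downstream to make the lemma quantitative for the concrete $\mathcal{K}$ and $\mathcal{F}$ of this paper.
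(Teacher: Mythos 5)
Your argument is correct: it is precisely the classical Eisenstat--Elman--Schultz one-step contraction combined with the field-of-values inequality $\langle \mathcal{B}v,v\rangle_{\mathcal{M}^{-1}}^2 \ge \mu(\mathcal{B})\mu(\mathcal{B}^{-1})\|v\|_{\mathcal{M}^{-1}}^2\|\mathcal{B}v\|_{\mathcal{M}^{-1}}^2$, which is exactly the proof in the cited reference \cite{starke1997field}; the paper itself gives no proof and simply imports the result. Your two caveats (consistency of the Arnoldi inner product with the minimized norm, and the bound being informative only when $\mu(\mathcal{K}\mathcal{F}^{-1})>0$) are the right ones, and the only step worth making explicit is that multiplying the two lower bounds for $\langle\mathcal{B}v,v\rangle_{\mathcal{M}^{-1}}$ requires both $\mu(\mathcal{B})$ and $\mu(\mathcal{B}^{-1})$ to be nonnegative, which holds in the regime where the lemma is actually used.
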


An `ideal' block preconditioner for $\mathcal{K}$ is given by 
	\begin{equation}\label{eq:ideal-precon}
		\mathcal{P} =\begin{pmatrix}
			A_{\gamma}  & B^T\\
			0 &  -S
		\end{pmatrix}.	
	\end{equation}
For this preconditioner, one computes
	\begin{equation*}
		\mathcal{K}^{-1} =\begin{pmatrix}
			A^{-1}_{\gamma} - A^{-1}_{\gamma}B^T S^{-1} B A^{-1}_{\gamma} &   A^{-1}_{\gamma}B^T S^{-1}\\
			S^{-1} BA^{-1}_{\gamma}  &-S^{-1}
		\end{pmatrix}
	\end{equation*} 
	and
	\begin{equation*}
		\mathcal{K}\mathcal{P}^{-1} = \begin{pmatrix}
			I  &0\\
			B A_{\gamma}^{-1} &  I
		\end{pmatrix}, \quad
		\mathcal{P}\mathcal{K}^{-1}  = \begin{pmatrix}
			I  &0\\
			-  B A_{\gamma}^{-1} &    I
		\end{pmatrix}.
	\end{equation*}
 When GMRES is applied to the preconditioned system with preconditioner $\mathcal{P}^{-1}$, it converges in 2 iterations~\cite{elman2014finite}.  
 
 In practice, it is too expensive to use the ideal preconditioner.
A more practical choice consists in replacing $S$ with $\widehat{S}$ in it. 
Therefore the preconditioner we analyze here is given by 
	\begin{equation}\label{eq:prec-inexact}
		\mathcal{\widehat{P}} =\begin{pmatrix}
			A_{\gamma}  & B^T\\
			0 &  -\widehat{S}
		\end{pmatrix},
	\end{equation}
with $\widehat{S}$ from \eqref{Sh}.	

	It can be shown that 
	\begin{equation*}
		\mathcal{K} \mathcal{\widehat{P}}^{-1} = \begin{pmatrix}
			I  &0\\
			B A_{\gamma}^{-1} &   S\widehat{S}^{-1}
		\end{pmatrix}, \quad  \mathcal{\widehat{P}}\mathcal{K}^{-1}  = \begin{pmatrix}
			I  &0\\
			- \widehat{S} S^{-1}B A_{\gamma}^{-1} &    \widehat{S} S^{-1}
		\end{pmatrix}.
	\end{equation*}
	
	\begin{lemma}
		For any $q$, we have 
		\begin{equation}\label{ineq:d1}
			\nu(\nu+\gamma) \langle \widehat{S}^{-1}q, q\rangle \lesssim  \langle \widehat{S}^{-1} S \widehat{S}^{-1}q,q\rangle
		\end{equation} 	 	
		and 
		\begin{equation}\label{ineq:hatSinv-S-inv}
			\langle \widehat{S}^{-1}q,q\rangle  \leq \langle S^{-1}q,q\rangle.  
		\end{equation}
	\end{lemma}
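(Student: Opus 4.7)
For inequality \eqref{ineq:d1}, my plan is an immediate reduction to Theorem~\ref{thm:Schur-eigs-bound}. Substitute $r=\widehat{S}^{-1}q$ and use the symmetry of $\widehat{S}$: the right-hand side becomes $\langle S r,r\rangle$, while $\langle \widehat{S}^{-1}q,q\rangle = \langle \widehat{S} r,r\rangle$. The claim is then exactly the bound $\nu(\nu+\gamma)\langle \widehat{S}r,r\rangle\lesssim\langle Sr,r\rangle$ established as \eqref{ineq:S-larger-than-hats}.

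For \eqref{ineq:hatSinv-S-inv}, the plan is to obtain explicit finite element expressions for both quadratic forms and compare them. Write $p=S^{-1}q$ and introduce $u=A_{\gamma}^{-1}B^T p$, so that $A_{\gamma}u=B^T p$ and $Bu+Cp=q$. Then
\[
\langle S^{-1}q,q\rangle=\langle p,q\rangle=\langle A_{\gamma}u,u\rangle+\langle Cp,p\rangle.
\]
Because $\bm{a}$ is divergence free and velocity test functions vanish on $\partial\Omega$, the convection matrix is skew symmetric, so $\langle A_{\gamma}u,u\rangle=\langle A_s u,u\rangle = \nu\|\nabla\bm{u}_h\|^2+\gamma\|\mathrm{div}\,\bm{u}_h\|^2$. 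Hence
\[
\langle S^{-1}q,q\rangle=\nu\|\nabla\bm{u}_h\|^2+\gamma\|\mathrm{div}\,\bm{u}_h\|^2+\tfrac{1}{\nu+\gamma}s(p_h,p_h).
\]
Next set $r=\widehat{S}^{-1}q$, giving $\langle \widehat{S}^{-1}q,q\rangle=(\nu+\gamma)^{-1}\bigl(\|r_h\|^2+s(r_h,r_h)\bigr)$.

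The key relation is obtained by equating the two expressions for $\langle q,q'\rangle$ (from $\widehat{S}r=q$ on one side and $Bu+Cp=q$ on the other): for every test $q_h'\in Q_h$,
\[
(r_h,q_h')+s(r_h,q_h') = -(\nu+\gamma)(q_h',\mathrm{div}\,\bm{u}_h)+s(p_h,q_h').
\]
Taking $q_h'=r_h$ and applying a Cauchy--Schwarz estimate jointly to the two right-hand terms (viewing $(\|r_h\|,s(r_h,r_h)^{1/2})$ and $((\nu+\gamma)\|\mathrm{div}\,\bm{u}_h\|,s(p_h,p_h)^{1/2})$ as two-component vectors in $\mathbb{R}^2$) yields
\[
\|r_h\|^2+s(r_h,r_h)\le (\nu+\gamma)^2\|\mathrm{div}\,\bm{u}_h\|^2+s(p_h,p_h).
\]
Dividing by $\nu+\gamma$ and comparing term by term with the expression above for $\langle S^{-1}q,q\rangle$: the stabilization terms cancel, and what remains to check is $(\nu+\gamma)\|\mathrm{div}\,\bm{u}_h\|^2\le \nu\|\nabla\bm{u}_h\|^2+\gamma\|\mathrm{div}\,\bm{u}_h\|^2$, i.e.\ $\|\mathrm{div}\,\bm{u}_h\|^2\le \|\nabla\bm{u}_h\|^2$, which is the standard pointwise inequality already used elsewhere in the paper.

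The delicate step is the joint Cauchy--Schwarz used to absorb both $\|r_h\|^2$ and $s(r_h,r_h)$ simultaneously; a naive Young's inequality applied to each cross term separately loses a factor of two and prevents the pressure-stabilization terms from cancelling cleanly in the end. Once one treats the two cross terms as an inner product in $\mathbb{R}^2$, the constants match with equality and only the divergence-vs-gradient bound is needed to close the argument.
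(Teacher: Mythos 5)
Your proof is correct and follows essentially the same route as the paper: part one is the identical substitution into \eqref{ineq:S-larger-than-hats}, and for part two the paper likewise computes $\langle S^{-1}q,q\rangle$ from the finite element equations (using skew-symmetry of the convection term), tests the pressure equation with $\hat q_h=\widehat{S}^{-1}q$ to get your key relation, and applies the same joint (vector) Cauchy--Schwarz before closing with $\|\operatorname{div}\bm{u}_h\|\le\|\nabla\bm{u}_h\|$. The only discrepancies are cosmetic sign conventions in the definition of $u$.
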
	
	\begin{proof}
		From \eqref{ineq:S-larger-than-hats}, we have
		\begin{equation*}
			\nu(\nu+\gamma) \langle \widehat{S}g, g\rangle \leq  \langle Sg,g\rangle.
		\end{equation*}
		Setting $g=\widehat{S}^{-1}q$ in the above inequality leads to \eqref{ineq:d1}.
		
		Next, we show the result in \eqref{ineq:hatSinv-S-inv}.  Consider $q=\widehat{S}\hat{q}$. Then,
		\begin{equation*}
			\langle \widehat{S}^{-1}q,q\rangle =  \langle \widehat{S}\hat{q},\hat{q}\rangle =\langle ((\gamma +\nu)^{-1}M+C ) \hat{q}, \hat{q}\rangle= (\gamma +\nu)^{-1}\left(\|\hat{q}_h\|^2 +s(\hat{q}_h,\hat{q}_h)\right),
		\end{equation*}
		and 
		\begin{equation*}
			\langle S^{-1}q, q\rangle =	\langle S^{-1}\hat{S}\hat{q} ,\widehat{S}\hat{q}\rangle= \langle g, \widehat{S}\hat{q}\rangle = (\gamma +\nu)^{-1} \left((g_h,\hat{q}_h)  +s(g_h,\hat{q}_h)\right),
		\end{equation*}
		where $g$ solves
		\begin{align*}
			A_{\gamma} w+B^Tg =0\quad\text{with}\quad
			-Bw+Cg= (\gamma +\nu)^{-1}M \yh{\hat{q}+C\hat{q}}.
		\end{align*}
		The finite element forms of the above equations are
		\begin{align}
			\nu(\nabla \bm{w}_h, \nabla \bm{v}_h) +\gamma ({\rm div}\bm{w}_h, {\rm div}\bm{v}_h) +  (\bm{a} \cdot \nabla \bm{w}_h, \bm{v}_h)  &=(g_h, {\rm div}\bm{v}_h), \label{eq:uv0-form}\\
			({\rm div}\bm{w}_h, \tau_h)  +\frac1{\nu+\gamma}s(g_h,\tau_h) &= \frac1{\nu+\gamma}\left[(\hat{q}_h,\tau_h) +s(\hat{q}_h,\tau_h)\right], \label{eq:tau-h-form}
		\end{align}
	for all  $ \bm{v}_h\in\bV_h$, $\tau_h\in Q_h$.	
		Taking $\tau_h=g_h$  in \eqref{eq:tau-h-form} and $\bm{v}_h=\bm{w}_h$ in \eqref{eq:uv0-form}, we have
		\begin{equation*}
			\langle S^{-1}q, q\rangle= ({\rm div}\bm{w}_h, g_h)  +\frac1{\nu+\gamma}s(g_h,g_h)= \nu \|\nabla \bm{w}_h\|^2 +\gamma \|{\rm div}\bm{w}_h\|^2+\frac1{\nu+\gamma}s(g_h,g_h).
		\end{equation*}
		Now letting $\tau_h=\hat{q}_h$  in \eqref{eq:tau-h-form}, we obtain
		\begin{align*}
			\langle \widehat{S} \hat{q},\hat{q}\rangle &=({\rm div}\bm{w}_h, \hat{q}_h)  +\tfrac1{\nu+\gamma}s(g_h,\hat{q}_h)\\
			&\leq \|{\rm div}\bm{w}_h\| \|\hat{q}_h\|  +\tfrac1{\nu+\gamma}s(g_h, g_h)^{\frac{1}{2}} s(\hat{q}_h,\hat{q}_h)^{\frac{1}{2}}\\
			&\leq \left ((\gamma+\nu)\|{\rm div}\bm{w}_h\|^2+  \tfrac1{\nu+\gamma}s(g_h, g_h)\right)^{\frac{1}{2}}  \tfrac1{\nu+\gamma}\left( \|\hat{q}_h\|^2+s(\hat{q}_h,\hat{q}_h) \right)^{\frac{1}{2}},\\
			&=\left ((\gamma+\nu)\|{\rm div}\bm{w}_h\|^2+  \tfrac1{\nu+\gamma}s(g_h, g_h)\right)^{\frac{1}{2}} \langle \widehat{S} \hat{q},\hat{q}\rangle^{\frac{1}{2}}.
		\end{align*}
		It follows that 
		\begin{equation*}
			\begin{split}
				\langle \widehat{S}^{-1}q,q\rangle &=\langle \widehat{S} \hat{q},\hat{q}\rangle \leq (\gamma+\nu)\|{\rm div}\bm{w}_h\|^2+  \frac{s(g_h, g_h)}{\nu+\gamma} \leq \nu \|\nabla \bm{w}_h\|^2 +\gamma\|{\rm div}\bm{w}_h\|^2+ \frac1{\nu+\gamma} s(g_h, g_h)\\
				&=\langle S^{-1}q, q\rangle, 
			\end{split}
		\end{equation*}
		which is the desired result.
	\end{proof}
	
		Following the technique used in \cite{klawonn1999block}, we prove the field-of-values bounds for our preconditioner. 

	\begin{theorem}\label{thm:inexact-second-block}
		The field-of-values of $\mathcal{K}$ with the preconditioner $\mathcal{\hat{P}}$ from \eqref{eq:prec-inexact} satisfy the following bounds:
		\begin{equation*}
			\mu(\mathcal{K}\mathcal{\widehat{P}}^{-1})= \inf_{v\in \mathbb{R}^{n}, q\in \mathbb{R}^{m}} \frac{ \left\langle \begin{pmatrix}
					I  &0\\
					B A_{\gamma}^{-1} &  S\widehat{S}^{-1}
				\end{pmatrix} \begin{pmatrix}
					v   \\ q \end{pmatrix},  \begin{pmatrix}
					v   \\ q \end{pmatrix} \right\rangle_{\mathcal{M}^{-1}} }{ \left\langle \begin{pmatrix}
					v   \\ q \end{pmatrix},  \begin{pmatrix}
					v   \\ q \end{pmatrix} \right\rangle_{\mathcal{M}^{-1}}} \gtrsim \nu(\nu+\gamma),  
		\end{equation*}
		and 
		\begin{equation*}
			\mu(\mathcal{\widehat{P}}\mathcal{K}^{-1})= \inf_{v\in \mathbb{R}^{n}, q\in \mathbb{R}^{m}} \frac{ \left\langle \begin{pmatrix}
					I  &0\\
					-\widehat{S}S^{-1}B A_{\gamma}^{-1} & \widehat{S}S^{-1}
				\end{pmatrix} \begin{pmatrix}
					v   \\ q \end{pmatrix},  \begin{pmatrix}
					v   \\ q \end{pmatrix} \right\rangle_{\mathcal{M}^{-1}} }{ \left\langle \begin{pmatrix}
					v   \\ q \end{pmatrix},  \begin{pmatrix}
					v   \\ q \end{pmatrix} \right\rangle_{\mathcal{M}^{-1}}}\geq \frac{1}{2}. 
		\end{equation*}
		Hence the residual norms of the preconditioned GMRES  \eqref{eq:prec-inexact} satisfy 
		\begin{equation}
			\| r_k\|_{\mathcal{M}^{-1}} \leq \left(1-\tilde c \nu(\nu+\gamma)\right)^{k/2} \| r_0\|_{\mathcal{M}^{-1}}=\yh{\varrho^{k}}\| r_0\|_{\mathcal{M}^{-1}},
		\end{equation}	
		where $\tilde c\simeq O(1)$ is independent of the mesh size and problem parameters. 
	\end{theorem}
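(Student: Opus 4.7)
The plan is to unify the two field-of-values quotients by substituting $z=\mathcal{\widehat{P}}x$ into the first and $z=\mathcal{K}x$ into the second, with $x=(u,p)^T$. Both substitutions enforce $A_\gamma u + B^T p = v$, and since $A_s$ is the symmetric part of $A_\gamma$, one has $\langle v,u\rangle = \|u\|_{A_s}^2 + \langle p,Bu\rangle$. Expanding the two numerators $\langle \mathcal{K}\mathcal{\widehat{P}}^{-1}z,z\rangle_{\mathcal{M}^{-1}}$ and $\langle \mathcal{\widehat{P}}\mathcal{K}^{-1}z,z\rangle_{\mathcal{M}^{-1}}$ and eliminating $\langle p,Bu\rangle$ shows that both reduce to the same quadratic form
\[
  \|v\|_{A_s^{-1}}^2 - \langle v,u\rangle + \|u\|_{A_s}^2 + \langle Cp,p\rangle,
\]
which, by Young's inequality, is bounded below by $\tfrac{1}{2}\|v\|_{A_s^{-1}}^2 + \tfrac{1}{2}\|u\|_{A_s}^2 + \langle Cp,p\rangle$. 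The two bounds then differ only in how the denominator is controlled.

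For $\mu(\mathcal{\widehat{P}}\mathcal{K}^{-1})\ge\tfrac{1}{2}$, here $q=Bu-Cp$, and I would set $\phi=\widehat{S}^{-1}q$ and write
$\|q\|_{\widehat{S}^{-1}}^2 = \langle \phi,q\rangle = -(\phi_h,{\rm div}\,u_h) - \tfrac{1}{\nu+\gamma}s(p_h,\phi_h)$. Applying the combined Cauchy--Schwarz inequality $a_1 b_1+a_2 b_2\le \sqrt{a_1^2+a_2^2}\sqrt{b_1^2+b_2^2}$ with weights chosen so that $\sqrt{a_1^2+a_2^2}=\|\phi\|_{\widehat{S}}=\|q\|_{\widehat{S}^{-1}}$, I obtain
\[
  \|q\|_{\widehat{S}^{-1}}^2 \le (\nu+\gamma)\|{\rm div}\,u_h\|^2 + \langle Cp,p\rangle \le \|u\|_{A_s}^2 + \langle Cp,p\rangle,
\]
where the last step uses the pointwise estimate $\|{\rm div}\,u_h\|^2 \le \|\nabla u_h\|^2$, which gives $(\nu+\gamma)\|{\rm div}\,u_h\|^2 \le \nu\|\nabla u_h\|^2+\gamma\|{\rm div}\,u_h\|^2 = \|u\|_{A_s}^2$. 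The lower bound on the numerator then dominates half the denominator, giving the ratio $\tfrac{1}{2}$.

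For $\mu(\mathcal{K}\mathcal{\widehat{P}}^{-1})\gtrsim\nu(\nu+\gamma)$ the denominator is $\|v\|_{A_s^{-1}}^2+\|p\|_{\widehat{S}}^2$ (since $q=-\widehat{S}p$), so the essential step is to establish
\[
  \nu(\nu+\gamma)\|p\|_{\widehat{S}}^2 \;\lesssim\; \|u\|_{A_s}^2 + \|v\|_{A_s^{-1}}^2 + \langle Cp,p\rangle.
\]
This is the main obstacle. I would prove it by adapting the inf-sup argument from Lemma~\ref{thm:conti-stabi-conditions}: choose $\hat v_h\in\bm{V}_h$ with $\|\nabla\hat v_h\|=1$ such that $\delta_0\|p_h\|\le b(\hat v_h,p_h)+s(p_h,p_h)^{1/2}$, then use the first row of $A_\gamma u + B^T p = v$ tested against $\hat v_h$ to rewrite $b(\hat v_h,p_h) = -a(u_h,\hat v_h)-a_s(u_h,\hat v_h)+\langle v,\hat v_h\rangle$, and bound each piece using Poincar\'e--Friedrichs, $\|\hat v_h\|_{A_s}\le\sqrt{\nu+\gamma}$, and the assumption $\nu,\gamma\lesssim 1$. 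Squaring, multiplying by $(\nu+\gamma)^{-1}$, and absorbing the stabilization term via $\langle Cp,p\rangle=(\nu+\gamma)^{-1}s(p_h,p_h)$ produces the displayed inequality. Combined with the common numerator lower bound and $\nu(\nu+\gamma)\lesssim 1$, this yields the claimed ratio.

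Substituting the two field-of-values estimates into Lemma~\ref{Lres} with $\mathcal{F}=\mathcal{\widehat{P}}$ gives $\mu(\mathcal{K}\mathcal{\widehat{P}}^{-1})\mu(\mathcal{\widehat{P}}\mathcal{K}^{-1}) \gtrsim \nu(\nu+\gamma)$, hence the advertised residual bound with $\tilde c=\Theta(1)$ independent of $h$, $\nu$, and $\gamma$.
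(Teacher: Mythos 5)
Your argument is correct, but it takes a genuinely different route from the paper's. The paper works entirely in the $z$-variable: it first uses the auxiliary inequalities \eqref{ineq:d1} and \eqref{ineq:hatSinv-S-inv} to replace $\widehat{S}^{-1}$ in the denominator by $\widehat{S}^{-1}S\widehat{S}^{-1}$ (at the cost of the factor $\nu(\nu+\gamma)$) and by $S^{-1}$ (at no cost), which turns both quotients into generalized eigenvalue problems whose block-diagonal parts match; it then invokes the identity $(\lambda-1)^2=\langle H_{11}^{-1}H_{21}^Tq,H_{21}^Tq\rangle/\langle H_{22}q,q\rangle$ and evaluates the right-hand side to be $1/4$ using $\langle A_sA_\gamma^{-T}u,A_\gamma^{-T}u\rangle=\langle A_\gamma^{-1}u,u\rangle$, whence $\lambda\ge 1/2$ in both cases. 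You instead parametrize by $x=(u,p)$, observe that the two numerators coincide and equal $\|v\|^2_{A_s^{-1}}-\langle v,u\rangle+\|u\|^2_{A_s}+\langle Cp,p\rangle$ with $v=A_\gamma u+B^Tp$ --- exploiting the same cancellation of the skew-symmetric part of $A_\gamma$ that makes the paper's ratio exactly $1/4$ --- apply Young's inequality, and then control each denominator by hand. Your Cauchy--Schwarz bound $\|Bu-Cp\|^2_{\widehat{S}^{-1}}\le(\nu+\gamma)\|{\rm div}\,u_h\|^2+\langle Cp,p\rangle$ is essentially the paper's proof of \eqref{ineq:hatSinv-S-inv} generalized from the particular solution pair to arbitrary $(u,p)$, and your inf-sup step re-derives the content of \eqref{ineq:S-larger-than-hats} in a form adapted to the residual equation rather than quoting Theorem~\ref{thm:Schur-eigs-bound}. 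I checked the two denominator estimates and the constants: both close (the second bound comes out as exactly $1/2$, matching the paper). The paper's version buys a modular structure in which the Schur-complement spectral bounds do all the heavy lifting and the block analysis reduces to one algebraic identity; yours buys a self-contained, more elementary derivation that avoids symmetrizing $\widehat{S}^{-1}S\widehat{S}^{-1}$ and makes transparent that the loss $\nu(\nu+\gamma)$ enters only through the pressure inf-sup estimate. The final application of Lemma~\ref{Lres} is the same in both.
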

	\begin{proof}
		For arbitrary $[v;q]\neq 0$, we need to show that 
		\begin{equation*}
			\mu(\mathcal{K}\mathcal{\widehat{P}}^{-1})= \inf_{v\in \mathbb{R}^{n}, q\in \mathbb{R}^{m}} \frac{ \left\langle \begin{pmatrix}
					A^{-1}_s  &0\\
					\widehat{S}^{-1}B A_{\gamma}^{-1} &  \widehat{S}^{-1}S\widehat{S}^{-1}
				\end{pmatrix} \begin{pmatrix}
					v   \\ q \end{pmatrix},  \begin{pmatrix}
					v   \\ q \end{pmatrix} \right\rangle} { \left\langle \begin{pmatrix}
					A^{-1}_s & 0   \\ 0& \widehat{S}^{-1}\end{pmatrix} \begin{pmatrix}
					v   \\ q \end{pmatrix},  \begin{pmatrix}
					v   \\ q \end{pmatrix} \right\rangle} \gtrsim \nu(\nu+\gamma),  
		\end{equation*}
		and 
		\begin{equation*}
			\mu(\mathcal{\widehat{P}}\mathcal{K}^{-1})= \inf_{v\in \mathbb{R}^{n}, q\in \mathbb{R}^{m}} \frac{ \left\langle \begin{pmatrix}
					A^{-1}_s  &0\\
					-S^{-1}B A_{\gamma}^{-1} &  S^{-1}
				\end{pmatrix} \begin{pmatrix}
					v   \\ q \end{pmatrix},  \begin{pmatrix}
					v   \\ q \end{pmatrix} \right\rangle}{ \left\langle \begin{pmatrix}
					A^{-1}_s & 0   \\ 0& \widehat{S}^{-1} \end{pmatrix}\begin{pmatrix}
					v   \\ q \end{pmatrix},  \begin{pmatrix}
					v   \\ q \end{pmatrix} \right\rangle} \geq  \frac{1}{2}.
		\end{equation*}
		With the help of \eqref{ineq:d1}, we obtain
		\begin{align} 
			\mu(\mathcal{K}\mathcal{\widehat{P}}^{-1})&= \inf_{v\in \mathbb{R}^{n}, q\in \mathbb{R}^{m}} \frac{ \left\langle \begin{pmatrix}
					A^{-1}_s  &0\\
					\widehat{S}^{-1}B A_{\gamma}^{-1} &  \widehat{S}^{-1}S\widehat{S}^{-1}
				\end{pmatrix} \begin{pmatrix}
					v   \\ q \end{pmatrix},  \begin{pmatrix}
					v   \\ q \end{pmatrix} \right\rangle} { \left\langle \begin{pmatrix}
					A^{-1}_s & 0   \\ 0& \widehat{S}^{-1}\end{pmatrix} \begin{pmatrix}
					v   \\ q \end{pmatrix},  \begin{pmatrix}
					v   \\ q \end{pmatrix} \right\rangle} \nonumber\\
			&\gtrsim \nu(\nu+\gamma)\inf_{v\in \mathbb{R}^{n}, q\in \mathbb{R}^{m}} \frac{ \left\langle \begin{pmatrix}
					A^{-1}_s  &0\\
					\widehat{S}^{-1}B A_{\gamma}^{-1} &   \widehat{S}^{-1}S\widehat{S}^{-1}
				\end{pmatrix} \begin{pmatrix}
					v   \\ q \end{pmatrix},  \begin{pmatrix}
					v   \\ q \end{pmatrix} \right\rangle} { \left\langle \begin{pmatrix}
					A^{-1}_s & 0   \\ 0&  \widehat{S}^{-1}S\widehat{S}^{-1} \end{pmatrix} \begin{pmatrix}
					v   \\ q \end{pmatrix},  \begin{pmatrix}
					v   \\ q \end{pmatrix} \right\rangle}.\label{eq:inexact-KP}
		\end{align}
		Using \eqref{ineq:hatSinv-S-inv}, we get
		\begin{align}
			\mu(\mathcal{\widehat{P}}\mathcal{K}^{-1})&= \inf_{v\in \mathbb{R}^{n}, q\in \mathbb{R}^{m}} \frac{ \left\langle \begin{pmatrix}
					A^{-1}_s  &0\\
					-S^{-1}B A_{\gamma}^{-1} &  S^{-1}
				\end{pmatrix} \begin{pmatrix}
					v   \\ q \end{pmatrix},  \begin{pmatrix}
					v   \\ q \end{pmatrix} \right\rangle}{ \left\langle \begin{pmatrix}
					A^{-1}_s & 0   \\ 0& \widehat{S}^{-1} \end{pmatrix}\begin{pmatrix}
					v   \\ q \end{pmatrix},  \begin{pmatrix}
					v   \\ q \end{pmatrix} \right\rangle}  \nonumber\\
			&\geq  \inf_{v\in \mathbb{R}^{n}, q\in \mathbb{R}^{m}} \frac{ \left\langle \begin{pmatrix}
					A^{-1}_s  &0\\
					-S^{-1}B A_{\gamma}^{-1} &  S^{-1}
				\end{pmatrix} \begin{pmatrix}
					v   \\ q \end{pmatrix},  \begin{pmatrix}
					v   \\ q \end{pmatrix} \right\rangle}{ \left\langle \begin{pmatrix}
					A^{-1}_s & 0   \\ 0& S^{-1} \end{pmatrix}\begin{pmatrix}
					v   \\ q \end{pmatrix},  \begin{pmatrix}
					v   \\ q \end{pmatrix} \right\rangle}. \label{eq:inexact-PK}
		\end{align}
		To estimate the quantities in the right hand sides of \eqref{eq:inexact-KP} and \eqref{eq:inexact-PK}, it is sufficient to find the lower bounds for the generalized eigenvalues of 
		\begin{equation*} 
			\begin{pmatrix}
				H_{11}  &  H_{21}^T    \\
				H_{21}  &  H_{22} 
			\end{pmatrix} \begin{pmatrix}
				v   \\ q \end{pmatrix} =\lambda  \begin{pmatrix}
				H_{11}  &  0    \\
				0  &  H_{22} 
			\end{pmatrix}\begin{pmatrix}
				v   \\ q \end{pmatrix}, 
		\end{equation*}
		with $H_{11} =A_s^{-1}$  and 
		\begin{equation*}
			\begin{cases}
				H_{21}&= \frac{1}{2}\widehat{S}^{-1} B A_{\gamma}^{-1}, \quad  H_{22}= \frac{1}{2}\big(\widehat{S}^{-1}S\widehat{S}^{-1}+ \widehat{S}^{-1}S^T\widehat{S}^{-1}\big) \,\, \,\text{for} \,\, \eqref{eq:inexact-KP}, \\
				H_{21}&= -\frac{1}{2}S^{-1} B A_{\gamma}^{-1}, \quad H_{22}= \frac{1}{2} (S^{-1}+S^{-T})\,\,\, \text{for} \,\,\eqref{eq:inexact-PK}.
			\end{cases}
		\end{equation*}
		Straightforward computations reveal the identity
		\begin{equation*}
			(\lambda-1)^2 =\frac{\langle H_{11}^{-1}H_{21}^Tq, H_{21}^Tq\rangle}{\langle H_{22}q,q\rangle}.
		\end{equation*}
To estimate the right-hand side in	\eqref{eq:inexact-KP}, one sets $g=\widehat{S}^{-1} q$ in the above quantity leading to
		\begin{align*}
			(\lambda-1)^2 &=\frac{1}{4} \frac{\langle A_s A_{\gamma}^{-T} B^T \widehat{S}^{-1} q, A_{\gamma}^{-T} B^T \widehat{S}^{-1} q\rangle }{ \langle \widehat{S}^{-1}S\widehat{S}^{-1}q,q \rangle}
			=\frac{1}{4}\frac{\langle A_s A_{\gamma}^{-T} B^Tg, A_{\gamma}^{-T} B^Tg \rangle  }{ \langle  Sg,g \rangle}\\
			&= \frac{1}{4}\frac{\langle A_s A_{\gamma}^{-T} B^Tg, A_{\gamma}^{-T} B^Tg \rangle}{\langle  BA_{\gamma}^{-1} B^Tg,g\rangle}
			= \frac{1}{4}\frac{\langle A_s A_{\gamma}^{-T} w, A_{\gamma}^{-T}w \rangle}{\langle A_{\gamma}^{-1}w,w\rangle}
			= \frac{1}{4}\frac{\langle A_s w_1, w_1 \rangle}{\langle A_{\gamma} w_1, w_1\rangle}= \frac{1}{4},
		\end{align*}
		where $w=B^Tg$ and $w_1=A_{\gamma}^{-T}w$.

To estimate the right-hand side in	\eqref{eq:inexact-PK}, one sets $g=S^{-1} q$ leading to
		\begin{equation*}
			(\lambda-1)^2 =\frac{1}{4} \frac{\langle A_s A_{\gamma}^{-T} B^T S^{-1} q, A_{\gamma}^{-T} B^T S^{-1} q\rangle}{ \langle S^{-1}q,q \rangle}
			=\frac{1}{4} \frac{\langle A_s A_{\gamma}^{-T} B^Tg, A_{\gamma}^{-T} B^Tg \rangle}{ \langle Sg,g \rangle}
			= \frac{1}{4}.
		\end{equation*}

		Thus, in both cases we have $|1-\lambda|= \frac{1}{2}$. It follows that $\lambda\geq \frac{1}{2}$. Combined with \eqref{eq:inexact-KP} and \eqref{eq:inexact-PK} this proves the  theorem.
	\end{proof}

	\begin{remark}\rm 
		For the analysis of  $\mathcal{\widehat{P}}$, one can replace $\widehat{S}$ by a spectral equivalent operator, $\tilde{S}$, which satisfies
		\begin{equation*}
			\alpha_1 \langle \tilde{S}q,q \rangle \leq  \langle \widehat{S}q,q\rangle \leq \alpha_2 \langle \tilde{S}q, q\rangle,
		\end{equation*}
		where $\alpha_1,\alpha_2>0$, and consider the norm $\|\cdot\|_{\mathcal{M}^{-1}}$, with $\mathcal{M}=\text{diag}\{A_s,\tilde{S}\}$. Then, one obtains similar results to those stated in Theorem~\ref{thm:inexact-second-block}.
	\end{remark}
	
From the above analysis, we see that the simple Schur complement preconditioner \(\widehat{S}\) leads to a mesh-independent solver that is robust with respect to the critical parameter \(\nu\) once \(\gamma\) is \rev{proportional to $\nu^{-1}$}. In \(\mathcal{\widehat{P}}\), we leave open the question of how to approximate the (1,1) block. In the literature, several approaches based on direct LU/ILU factorization, the block upper-triangular part of \(A_{\gamma}\), or the multigrid technique for the (1,1) block have been studied (see \cite{benzi2006augmented, olshanskii2022recycling, borm2012, farrell2021reynolds}). Regarding field-of-values convergence analysis of GMRES with approximations to both \(A_{\gamma}\) and \(S\), similar results can be obtained by following the techniques in \cite{klawonn1999block, benzi2011field}. However, this extension is beyond the scope of this work.

In the following section, we will validate our theoretical findings by considering two benchmark problems: the driven cavity flow and flow past a backward-facing step. To validate and assess the actual dependence of convergence rates on the Reynolds number, we use exact solves for the (1,1) and (2,2) blocks of the preconditioner.

	\section{Numerical experiments}\label{sec:num}
	In this section we present results of numerical experiments with $Q_1-Q_1$ and $Q_2-Q_2$ elements.  For pressure  stabilization, we use the one from \cite{dohrmann2004stabilized}:
	\begin{equation*}
		s(p_h,q_h)=	(p_h-\pi_{h,T}p_h, q_h-\pi_{h,T}q_h),
	\end{equation*}
	where $\pi_{h,T}$ is the elementwise $L_2$-projection onto polynomials of degree $k-1$.
	
	
	We consider two 2D \rev{Navier--Stokes} flows to access the method accuracy and solver performance.\smallskip
	
	{\bf Problem 1 (driven cavity):} The tight cavity flow problem is defined  by  $\Omega=[-1,1]^2$, $\bm{f}=0$, $u_1(x,1)=1$, $u_2(x,1)=0$ for $x\in(-1,1)$, and $\bm{u}=0$ on the remainder of the boundary. The Reynolds number is given by $Re=2/\nu$.\smallskip
	
	{\bf Problem 2 (backward facing step):} It represents a slow flow in a rectangular duct with a sudden expansion, or flow over a step. The L-shaped domain is generated by taking the complement in $\Omega = (-1, L) \times (-1, 1)$ of the square $(-1, 0] \times (-1, 0]$. A Poiseuille flow profile $u_1(x,y) = 4y(1-y)$ is imposed on the inflow boundary ($x = -1; 0 \leq y \leq 1$), and a no-flow (zero-velocity) condition is imposed on the top and bottom walls. A Neumann condition of zero normal stress is applied at the outflow boundary.
	
	For the backward-facing step problem, the average inflow is defined as
	\begin{equation}
		U = \int_{0}^{1} \frac{1}{\mathcal{H} - h_s} u_1(-1, y) , dy,
	\end{equation}
	where $h_s = 1$ is the step size, and $\bm{u} = (u_1, u_2)$. The Reynolds number is defined as $Re = U \mathcal{H} / \nu$, where $\mathcal{H}$ is the height of the channel and $U$ is the mean inflow velocity. It follows that $Re = 4 / (3\nu)$.
	\smallskip
	
	Our numerical setup is as follows:
	\begin{itemize}
		\item The nonlinear algebraic system for the steady-state Navier-Stokes equations is solved using Picard iterations starting from zero velocity.
		\item In each Picard iteration, the discrete Oseen problem is solved using the preconditioned GMRES method.
		\item The stopping criterion for the inner linear solver is GMRES achieving a maximum of 400 iterations or a residual of $ \| r_k\| \leq 10^{-5}\| r_0\|$. For the outer nonlinear solver, the criterion is a maximum of 100 iterations or a nonlinear residual of $ \| s_k\| \leq 10^{-5}\| s_0\|$.
		\item The backward-facing step problem is solved for $Re=150$ with $L=5$ and $Re=800$ with $L=24$. The Reynolds number $Re=800$ is close to the first bifurcation point where the flow becomes unsteady~\cite{gresho1993steady}.
		\item For the driven cavity problem, we consider $Re=1000$, $Re=3200$, and $Re=5000$.
	\end{itemize}
	A zero initial guess was used for GMRES in all experiments.
	
	The choice of the grad-div stabilization parameter $\gamma$ is delicate. Analysis of the preconditioned GMRES suggests that a large $\gamma$ would benefit faster convergence. However, an excessively large parameter may ``overstabilize'' the problem, yielding less accurate finite element solutions to the Navier-Stokes system~\cite{olshanskii2009grad}. While there is substantial literature on the optimal choice of $\gamma$ for inf-sup stable elements (see, e.g., \cite{gelhard2005stabilized, olshanskii2009grad, jenkins2014parameter, de2016grad}), the question is less studied for equal-order elements. Here, we adopt $\gamma=0.1$, a common choice in the literature on grad-div stabilization and AL preconditioners for inf-sup stable elements. Besides iteration numbers, we will monitor the accuracy of FE solutions with this choice of $\gamma$.

	In addition to the preconditioner $\mathcal{\widehat{P}}$ defined in \eqref{eq:prec-inexact}, we also run the same experiments with the pressure convection-diffusion preconditioner (PCD) from \cite{elman2014finite} for comparison. Moreover, we propose an extension of the PCD preconditioner for grad-div stabilized problems. Specifically, we consider:
	\begin{equation} 
		\mathcal{P}_p =\begin{pmatrix}
			A_{\gamma}  & B^T\\
			0 &  -M_s
		\end{pmatrix},
	\end{equation}
	with 
	\begin{equation}
		M_s = A_pF_p^{-1}Q,
	\end{equation}
	where $A_p$ is the discrete pressure  Laplacian, $F_p=(\nu+\gamma) A_p  +N_p$ where $N_p$ is the ``pressure  advection matrix'' (cf.~\cite{elman2014finite}), and $Q$ is the vector mass matrix. 
	
	All numerical results reported below are produced using IFISS \cite{ers07}.

	\subsection{Driven cavity problem}
	
	\begin{figure}[h]
		\centering
		\includegraphics[width=0.49\linewidth]{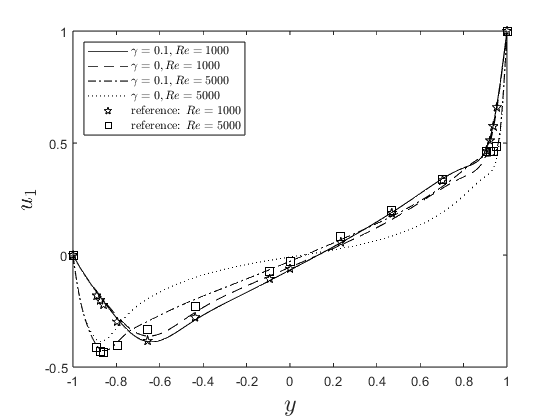} 
		\includegraphics[width=0.49\linewidth]{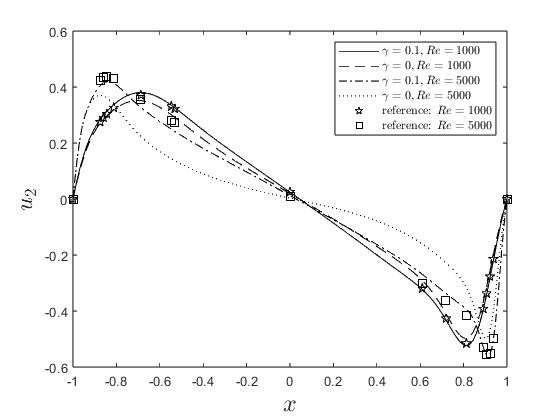}  
		\caption{$Q_1-Q_1$ FE solutions with and without grad-div stabilization for $Re=1000$ with $h=1/64$ and  $Re=5000$ with $h=1/128$. Left panel: $u_1$-component of velocity along the vertical center line of the cavity; Right panel: $u_2$-component of velocity along the horizontal center line of the cavity. The reference data is from \cite{ghia1982high}. }\label{fig-Q1-ux-uy} 
	\end{figure}
	\begin{figure}[h] 
		\centering
		\includegraphics[width=0.49\linewidth]{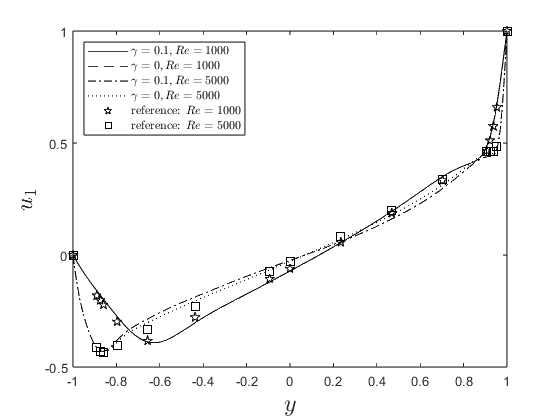} 
		\includegraphics[width=0.49\linewidth]{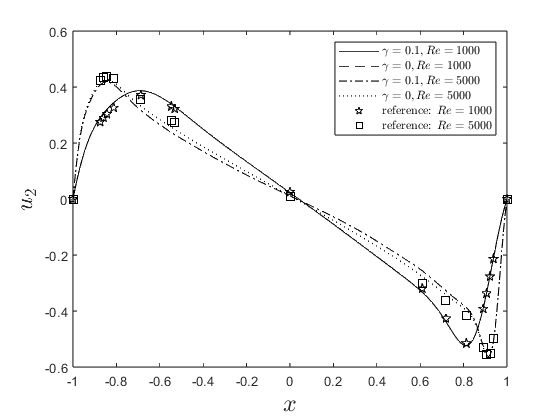}  
		\caption{$Q_2-Q_2$  FE solutions with and without grad-div stabilization for $Re=1000$ and $Re=5000$ with $h=1/64$. Left panel: $u_1$-component of velocity along the vertical center line of the cavity; Right panel: $u_2$-component of velocity along the horizontal center line of the cavity. The reference data is from \cite{ghia1982high}. } \label{fig-Q2-ux-uy}
	\end{figure}
	
	We start with the driven cavity problem. To assess the accuracy of the grad-div stabilized equal-order FE methods, we compare the velocity profiles of the FE solutions with reference data from \cite{ghia1982high}. Figure \ref{fig-Q1-ux-uy} displays $Q_1$ velocity solutions along cavity cross-sections for two Reynolds numbers. It is evident that using $\gamma=0.1$ yields \emph{significantly improved} approximations compared to $\gamma=0$, particularly noticeable for $Re=5000$. Figure \ref{fig-Q2-ux-uy} demonstrates that $Q_2$ elements produce very similar results with both $\gamma=0.1$ and $\gamma=0$, both aligning well with the reference data.

%

	\begin{figure}[h]
		\centering
		\includegraphics[width=0.49\linewidth]{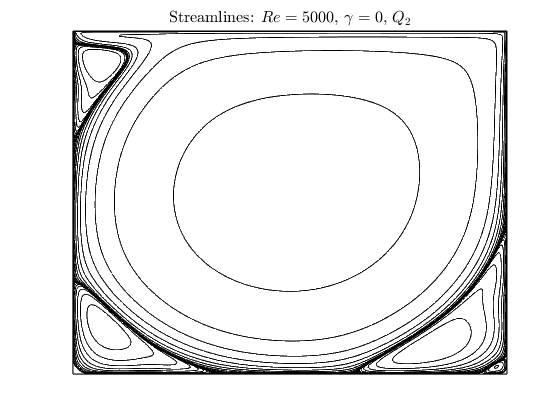} 
		\includegraphics[width=0.49\linewidth]{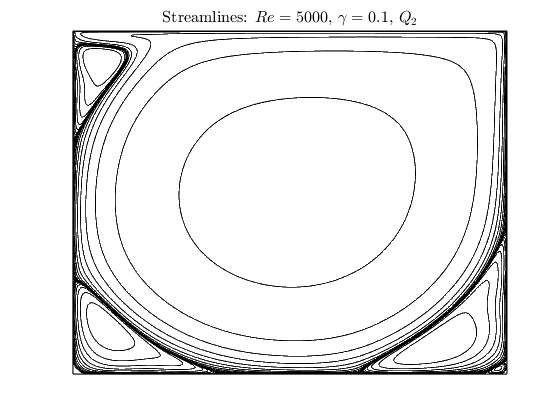}  
		\caption{Exponentially spaced streamlines of $Q_2-Q_2$  FE solutions without and with grad-div stabilization  with $h=1/64$ for   $Re=5000$.
		} \label{fig-stream1-5000}
	\end{figure}

	
\rev{ The computed vortex dynamics align with patterns extensively documented in the literature. For $Re = 1000$, the primary vortex and bottom secondary vortices are observed, while a vortex near the upstream upper corner appears at $Re = 3200$. At $Re = 5000$, tertiary vortices become visible. Figure~\ref{fig-stream1-5000} illustrates the flow at a Reynolds number of 5000, showing a very similar pattern recovered using $Q_2$  equal-order elements with $\gamma=0$ and $\gamma=0.1$.
}

	Adding the grad-div term with $\gamma=0.1$ to the equations does not compromise the accuracy of the finite element solutions; in fact, it notably enhances accuracy, particularly when lower-order elements are used to simulate higher Reynolds number flows. Therefore, we are now interested in examining the impact of this augmentation and proposed preconditioning on the efficiency of the algebraic solver.


	\begin{table}[h]
		\caption{Driven cavity: The Picard iteration counts (and the average of number of preconditioned GMRES iterations) for $Re=1000$. } 
		\centering
		\begin{tabular}{c|cccc |cccc} 
			\toprule
			Preconditioner & \multicolumn{4}{c}{$\mathcal{\widehat{P}}$}  & \multicolumn{4}{c}{$\mathcal{P}_p$}\\
			\hline
			elements   & \multicolumn{2}{c}{$Q_1$}  & \multicolumn{2}{c|}{$Q_2$}
			& \multicolumn{2}{c}{$Q_1$}  & \multicolumn{2}{c}{  $Q_2$} \\	
			\cmidrule{2-3} \cmidrule{4-5}  \cmidrule{6-7} \cmidrule{8-9}
			$h$	&$\gamma =0.1$   &$\gamma= 0$   &  $\gamma =0.1$ & $\gamma =0$    &$\gamma =0.1$   &$\gamma= 0$   &  $\gamma =0.1$ & $\gamma =0$     \\ 
			\midrule
			1/8     &8(15)  &- \cmmnt{100(61)} &8(16)  &15(289)  &8(15) &- \cmmnt{100(97)}   &8(16)  &15(98) \\ 
			\hline
			1/16    &8(15)  &11(128)           &6(15)  &7(400)   &8(15)   &11(79)           &6(16)   &7(79)\\ 
			\hline
			1/32    &6(15)  &7(230)            &5(14)  &5(388)   &6(14)   &7(81)          &5(15)   &5(59) \\ 
			\hline
			1/64    &5(14)  &5(331)            &4(14)  &4(384)   &5(14)   &5(60)          &4(15)   &4(50)\\ 
			\hline
		\end{tabular}\label{tb:cavity-iteration-1000}
	\end{table}
	
	\begin{table}[h]
		\caption{Driven cavity: The Picard iteration counts (and the average of number of preconditioned GMRES iteration)  for $Re=3200$. }  
		\centering
		\begin{tabular}{c|cccc |cccc} 
			\toprule
			Preconditioner & \multicolumn{4}{c}{$\mathcal{\widehat{P}}$}  & \multicolumn{4}{c}{$\mathcal{P}_p$}\\
			\hline
			elements   & \multicolumn{2}{c}{$Q_1$}  & \multicolumn{2}{c|}{$Q_2$}
			& \multicolumn{2}{c}{$Q_1$}  & \multicolumn{2}{c}{  $Q_2$} \\	
			\cmidrule{2-3} \cmidrule{4-5}  \cmidrule{6-7} \cmidrule{8-9}
			$h$	&$\gamma =0.1$   &$\gamma= 0$   &  $\gamma =0.1$ & $\gamma =0$    &$\gamma =0.1$   &$\gamma= 0$   &  $\gamma =0.1$ & $\gamma =0$     \\ 
			\midrule
			1/8     &10(16)   &- \cmmnt{100(57)}          &9(19)   &-  \cmmnt{100(400)}   &10(17)   &- \cmmnt{100(253)}    &9(20)   &- \cmmnt{100(400)}  \\ 
			\hline
			1/16    &9(16)   &- \cmmnt{100(128)}         &8(16)   &- \cmmnt{100(400)}   &9(18)    &- \cmmnt{100(80)}     &8(18)   &- \cmmnt{100(250)}  \\ 
			\hline
			1/32    &8(16)   &- \cmmnt{100(315)}         &7(16)   &16(400)              &8(17)   &- \cmmnt{100(156)}     &7(17)   &9(177)  \\ 
			\hline
			1/64    &7(16)   &11(400)                    &5(14)   &10(400)              &7(16)   &9(167)                 &5(16)   &6(125)  \\ 
			\hline
		\end{tabular}\label{tb:cavity-iteration-3200}
	\end{table}
	
	\begin{table}[h]
		\caption{Driven cavity: The Picard iteration counts (and the average of number of preconditioned GMRES iterations)   for $Re=5000$. } 
		\centering
		\begin{tabular}{c|cccc |cccc} 
			\toprule
			Preconditioner & \multicolumn{4}{c}{$\mathcal{\widehat{P}}$}  & \multicolumn{4}{c}{$\mathcal{P}_p$}\\
			\hline
			elements   & \multicolumn{2}{c}{$Q_1$}  & \multicolumn{2}{c|}{$Q_2$}
			& \multicolumn{2}{c}{$Q_1$}  & \multicolumn{2}{c}{  $Q_2$} \\	
			\cmidrule{2-3} \cmidrule{4-5}  \cmidrule{6-7} \cmidrule{8-9}
			$h$	&$\gamma =0.1$   &$\gamma= 0$   &  $\gamma =0.1$ & $\gamma =0$    &$\gamma =0.1$   &$\gamma= 0$   &  $\gamma =0.1$ & $\gamma =0$     \\ 
			\midrule
			1/8     &12(16)   &- \cmmnt{100(59)}     &11(21)   &-  \cmmnt{100(370)}           &12(18)   &- \cmmnt{100(289)}   &11(22)   &- \cmmnt{100(400)}  \\ 
			\hline
			1/16    &10(17)   &- \cmmnt{100(159)}    &9(18)   &- \cmmnt{100(400)}         &10(18)   &- \cmmnt{100(400)}    &10(20)   &- \cmmnt{100(400)}  \\ 
			\hline
			1/32    &9(17)   &- \cmmnt{100(308)}    &9(16)   &- \cmmnt{100(400)}         &9(18)   &- \cmmnt{100(221)}    &9(17)  &- \cmmnt{100(285)}  \\ 
			\hline
			1/64    &7(15)   &- \cmmnt{100(400)}    &7(15)   &11(400)                     &7(16)   &- \cmmnt{100(221)}    &7(18)   &7(163)  \\ 
			\hline
		\end{tabular}\label{tb:cavity-iteration-5000}
	\end{table}
	
	Table \ref{tb:cavity-iteration-1000} reports iteration numbers for $Re=1000$ with varying mesh sizes. The notation ``-'' in this and subsequent tables indicates that Picard's method reaches the maximum of 100 iterations without convergence. The results indicate that using $\gamma=0.1$ leads to convergence in only a few iterations for both Picard and GMRES methods, whereas for $\gamma=0$, GMRES requires significantly more iterations to satisfy the convergence criterion. Additionally, we observe that $Q_2$ elements require slightly more iterations than $Q_1$ elements to converge. Furthermore, our preconditioner ($\mathcal{\widehat{P}}$) with grad-div stabilization proves to be robust with respect to mesh size and elements. For $\gamma=0.1$, the modified PCD demonstrates similar results to $\mathcal{\widehat{P}}$. However, it is worth noting that the modified PCD is somewhat more computationally expensive and requires the definition of pressure ``convection-diffusion'' matrices not present in the original formulation, limiting its applicability.
	
	Tables~\ref{tb:cavity-iteration-3200} and \ref{tb:cavity-iteration-5000} present results for increasing Reynolds numbers. We see that the simple  preconditioner $\mathcal{\widehat{P}}$ with grad-div stabilization ($\gamma=0.1$) remains robust with respect to both $Re$ and mesh size. For $\gamma=0$, the GMRES with $\mathcal{\widehat{P}}$ and $\mathcal{P}_p$ preconditioners  do not converge, except for the PCD preconditioner for $Q_2$ elements on finer meshes.

	\subsection{Backward facing step problem} 
	Following the same line of analysis as above, we begin by comparing the accuracy of the computed solutions for augmented (i.e., grad-div stabilized with $\gamma=0.1$) and non-augmented problems ($\gamma=0$). The key statistics of interest for the backward-facing step problem are the streamlines' separation and reattachment points on the lower and upper walls.
	
	\begin{table}[h]
		\caption{Backward facing step: Numerical results for $h=1/32$ for $Q_1$ and $Q_2$ elements: $r_1$ is the reattachment point for the bottom vortex, $r_2$ is the left separation point for the upper vortex, and $r_3$ is the right reattachment point for the upper vortex. }
		\centering \small
		\begin{tabular}{l l l l l} 
			Configuration	&method   &$r_1$   & $r_2$    & $r_3$    \\ 
			\hline
			$Re=150$                &  &  & &\\
			$\gamma=0.1$ &$Q_1$  &4.22   &- &- \\
			$\gamma=0$ &$Q_1$   &4.00  &- &-\\
			$\gamma=0.1$ &$Q_2$  &4.25   &- &- \\
			$\gamma=0$ &$Q_2$   &4.25   &- &-\\
			Ref.\cite{olshanskii2002low}  &FE     &4.00  &- &- \\
			Ref.\cite{morgan1984notes}        &Experimental   &4.50  &- &-\\
			\hline
			$Re=800$  &  &  & &\\
			$\gamma=0.1$ &$Q_1$  &11.50  &9.41  &20.22  \\
			$\gamma=0$ &$Q_1$  &12.09  &9.88 &20.22 \\
			$\gamma=0.1$ &$Q_2$  &11.00   &8.72  &20.20   \\
			$\gamma=0$ &$Q_2$  &11.02  &8.73  &20.20  \\
			Ref.\cite{olshanskii2002low}  &FE    &11.88  &9.70  &20.42\\
			Ref.\cite{gartling1990test}          &FD         &12.20  &9.70  &20.96 \\
			Ref.\cite{canuto1998stabilized}        &Spectral   &11.94  &9.78  &20.92 \\
			Ref.\cite{armaly1983experimental}      &Experimental   &14.20  &- &-\\
			\hline
		\end{tabular}\label{tb:reatachment}
	\end{table} 
	
	Table \ref{tb:reatachment} shows these statistics for both Reynolds numbers and provides results from the literature for comparison.
	For $Re=150$, a vortex near the upper wall is not forming, so only one reattachment point is reported.
	
	From Table~\ref{tb:reatachment} we see that the accuracy of stabilized and non-stabilized solutions is similar and the results compare reasonably well to those found in the literature.

	\begin{figure}[h]  
		\centering
		\includegraphics[width=0.49\linewidth]{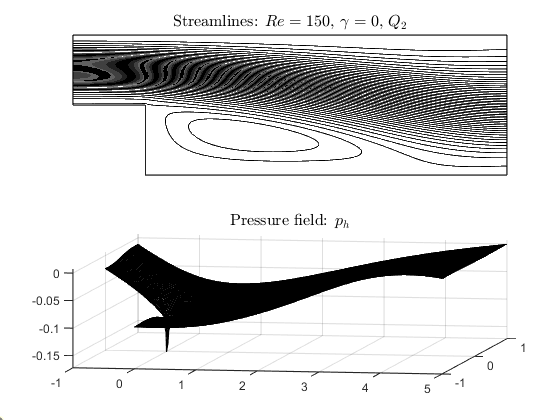}
		\includegraphics[width=0.49\linewidth]{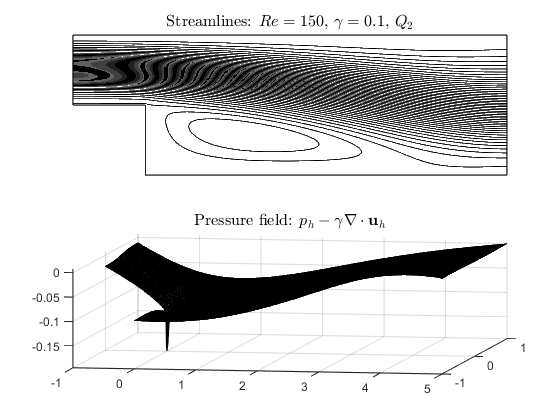}
		\caption{Flow over the backward-facing step for $Re=150$. Left panel:  $Q_2$ solution with $\gamma=0$, $h=1/32$.  Right panel:  $Q_2$ solution with $\gamma=0.1$, $h=1/32$.}\label{fig-Q2-150-step}
	\end{figure}

	\begin{figure}[h] 
		\centering
		\includegraphics[width=0.49\linewidth]{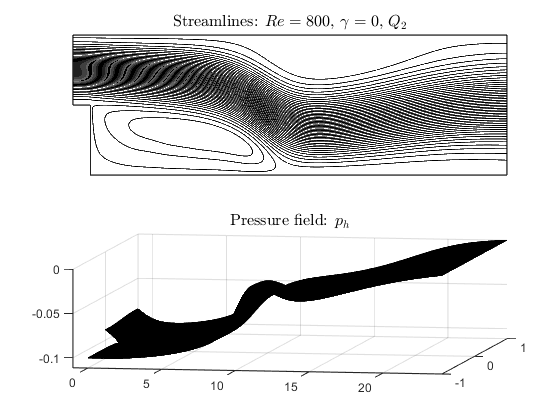}
		\includegraphics[width=0.49\linewidth]{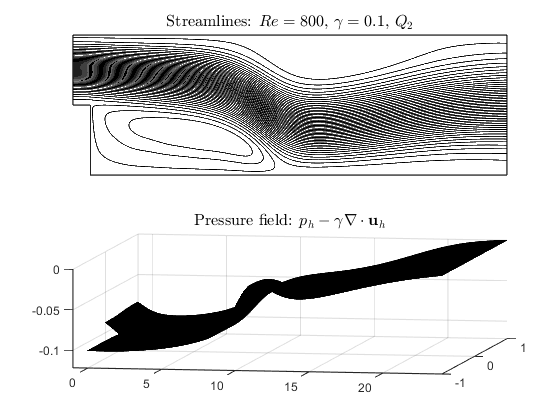}
		\caption{Flow over the backward-facing step for $Re=800$. Left panel:  $Q_2$ solution with $\gamma=0$, $h=1/32$.  Right panel:  $Q_2$ solution with $\gamma=0.1$, $h=1/32$.}\label{fig-Q1Q2-800-step} 
	\end{figure}
	
	Figures \ref{fig-Q2-150-step} and \ref{fig-Q1Q2-800-step} show streamlines and pressure of $Q_2$ solutions for $Re=150$ and $Re=800$. This time, solutions  with and without grad-div stabilization are visually indistinguishable.     
	The computed  $Q_1$ solutions with $h=1/32$ and both $\gamma=0.1$ and $\gamma=0$ were visually similar to those shown in the figures.

	\begin{table}[h]
		\caption{Backward facing step: The Picard iteration counts (and the average of number of preconditioned GMRES iteration) for $Re=150$. }
		\centering  
		\begin{tabular}{c|cccc |cccc} 
			\toprule
			Preconditioner & \multicolumn{4}{c}{$\mathcal{\widehat{P}}$}  & \multicolumn{4}{c}{$\mathcal{P}_p$}\\
			\hline
			elements   & \multicolumn{2}{c}{$Q_1$}  & \multicolumn{2}{c|}{$Q_2$}
			& \multicolumn{2}{c}{$Q_1$}  & \multicolumn{2}{c}{  $Q_2$} \\	
			\cmidrule{2-3} \cmidrule{4-5}  \cmidrule{6-7} \cmidrule{8-9}
			$h$	&$\gamma =0.1$   &$\gamma= 0$   &  $\gamma =0.1$ & $\gamma =0$    &$\gamma =0.1$   &$\gamma= 0$   &  $\gamma =0.1$ & $\gamma =0$   \\ 
			\midrule
			1/8     &12(27)   &11(118)    &11(28)   &11(255)    &12(22)    &11(54)   &11(26)   &11(49)   \\ 
			\hline
			1/16    &11(27)   &10(185)    &9(28)   &9(278)      &11(26)    &10(50)   &9(28)    &9(61) \\
			\hline
			1/32    &9(28)    &9(240)     &8(27)   &8(289)      &9(27)    &9(61)     &8(28)   &8(85) \\
			\hline
			1/64    &8(27)    &8(268)     &6(27)   &6(289)      &8(26)    &8(80)     &6(27)     &6(289) \\ 
			\hline
		\end{tabular}\label{tb:it-150-step} 
	\end{table}
	
	\begin{table}[h]
		\caption{Backward facing step: The Picard iteration counts (and the average of number of preconditioned GMRES iteration)  for $Re=800$.}
		\centering \hskip-1ex
		\begin{tabular}{c|cccc |cccc} 
			\toprule
			Preconditioner & \multicolumn{4}{c}{$\mathcal{\widehat{P}}$}  & \multicolumn{4}{c}{$\mathcal{P}_p$}\\
			\hline
			elements   & \multicolumn{2}{c}{$Q_1$}  & \multicolumn{2}{c|}{$Q_2$}
			& \multicolumn{2}{c}{$Q_1$}  & \multicolumn{2}{c}{  $Q_2$} \\	
			\cmidrule{2-3} \cmidrule{4-5}  \cmidrule{6-7} \cmidrule{8-9}
			$h$	 &$\gamma =0.1$ &$\gamma= 0$  & $\gamma =0.1$    & $\gamma =0$   &$\gamma =0.1$   &$\gamma= 0$   & $\gamma =0.1$    & $\gamma =0$  \\ 
			\midrule
			1/8    &80(45)       &-                 &62(46)   &97(400)              &80(27)           &-              &62(34)    &61(184)\\  
			\hline
			1/16  &64(46)       &53(400)            &48(46)   &-    &64(34)          &57(218)         &48(37)    &48(131)\\ 
			\hline
			1/32  &49(46)       &72(400)            &35(46)   &-                   
			&49(38)            &47(147)         &35(38)    &35(130)\\ 
			\hline
			1/64  &37(46)       &81(400)          &26(48)   &-          &37(37)            &36(136)         &26(37)    &25(186)  \\  
			\hline
		\end{tabular}\label{tb:it-800-step} 
	\end{table}
	
	Next, we present the iteration counts for Picard's method and the preconditioned GMRES methods using two preconditioners. Table \ref{tb:it-150-step} reveals that for $Re=150$, the nonlinear iteration numbers are nearly independent of $\gamma$. However, grad-div stabilization significantly impacts the inner GMRES iterations: incorporating grad-div stabilization notably reduces the GMRES iteration numbers, particularly for $Q_2$ elements. Moreover, the number of GMRES iterations with grad-div stabilization remains essentially unaffected by mesh size and the choice of elements, consistent with our theoretical findings. A similar observation applies to the (modified) PCD preconditioner, albeit requiring slightly fewer iterations for inner GMRES but with slightly higher computational cost.
	
	In Table \ref{tb:it-800-step}, we present the iteration numbers for $Re=800$. We notice an increase in Picard's iteration numbers compared to the $Re=150$ case, as expected, \rev{as well as the slight increase in GMRES iterations. At the same time,} the reduction in GMRES iteration counts with $\gamma=0.1$ is even more pronounced in this scenario than for $Re=150$. Once again, the iteration count of GMRES with preconditioner $\mathcal{\widehat{P}}$ and $\gamma=0.1$ remains unaffected by variations in mesh parameters and elements, consistent with our convergence analysis.\rev{The number of GMRES iterations per nonlinear step fluctuates very little from the averaged numbers reported in the tables.}

	\section{Conclusions}\label{sec:con}
	In this paper, we investigated a grad-div stabilized equal-order finite element discretization of the Oseen problem, focusing primarily on its algebraic properties and preconditioning. However, we also considered the accuracy aspect of the augmentation. We proposed a block triangular preconditioner akin to the augmented Lagrangian (AL) approach and applied field-of-values analysis to demonstrate that the convergence rate of GMRES with this ideal preconditioner is  mesh-independent. Numerical experiments supported our theoretical findings, indicating that a stabilization parameter of around $0.1$ can be used consistently across all Reynolds numbers and independent of mesh size.
	
	Furthermore, numerical experiments showed that the augmented solver outperforms the block-triangular preconditioner with the state-of-the-art pressure convection-diffusion Schur complement approximation. We also proposed a suitable modification of the latter for the augmented case. Both the AL-type preconditioner and the modified PCD demonstrated convergence rates insensitive to variations in Reynolds number and discretization parameters.
	
	The approximation to the $(1,1)$ block in the augmented system was not studied in this work.  We expect that an LU factorization or keeping a block upper-triangle part of this block should be an efficient strategy. We plan to explore such possibility in the future.
	


\bibliographystyle{siam}
\bibliography{PkPkBib}

\end{document}